\begin{document}

\begin{titlepage}

\title{\textbf{Homotopy theory in a quasi-abelian category}}
\author{James Wallbridge\\
Kavli IPMU (WPI), UTIAS, University of Tokyo\\
5-1-5 Kashiwanoha, Kashiwa, Chiba 277-8583, Japan\\
and\\
Hitachi Central Research Laboratory\\
1-280 Higashi-Koigakubo, Kokubunji, Tokyo 185-8601, Japan\\
\\
james.wallbridge.vf@hitachi.com}
\date{\today}
\maketitle

\begin{center}
\textbf{Abstract}
\end{center}

We prove that the category of dg-modules and dg-algebras in a Grothendieck quasi-abelian category are endowed with a Quillen model structure.  This allows some flexibility in setting up a theory of derived algebraic geometry in the infinite dimensional setting.  For example, the category of complete bornological vector spaces, or equivalently, convenient vector spaces, is a Grothendieck quasi-abelian category.  Closely related is the Grothendieck quasi-abelian category of ind-Banach spaces whose associated model category is shown to be Quillen equivalent.  Applications include the Chevalley-Eilenberg resolution and the Koszul resolution of a commutative monoid object in a Grothendieck quasi-abelian category.  These can be used for the calculation of derived quotients by an infinite dimensional Lie algebra and derived intersections respectively.

\tableofcontents

\end{titlepage}

\section*{Introduction}

It has been appreciated for some time now that the correct setting in which to undertake the quantization of field theories is within the realm of homotopical algebra.  In particular, the construction of (on-shell) gauge invariant observables should be understood cohomologically.  Doing so clarifies the roles played by seemingly exotic structures in the theory such as ghost fields and antifields.  Working homotopically also enables one to properly understand the dependence on certain structures, such as the contractibility of the space of gauge fixing conditions, and prove universal properties.

In order to undertake this program rigorously, one should work in an appropriate $\infty$-category.  For full control over this $\infty$-category, it is advantageous that it arises as the localization of a closed model category.  In this case many constructions such as limits and colimits can be induced from the underlying model category and in many cases proofs simplify considerably.  Since collections of objects in field theories form infinite dimensional spaces, we wish to develop a homotopy theory of infinite dimensional spaces.  

The model category we are interested in for the study of field theories in this article is the category of chain complexes of complete bornological vector spaces, or equivalently, chain complexes of convenient vector spaces, with a model structure whose weak equivalences are refined quasi-isomorphisms.  Complete bornological and convenient vector spaces are two possible approaches to infinite dimensional spaces whose theory has now reached a level of maturity \cite{M2,KM}.  

There are other categories of vector spaces one may wish to consider.  They sit naturally in a diagram of adjunctions
\[
\begin{tikzcd}
\Ind(\tu{SNorm}_k)  \arrow[r,"{\colim}",yshift=0.4mm] \arrow[d,xshift=0.4mm] &\Born_k  \arrow[l,"\diss",shift left] \arrow[r,"\gamma",yshift=0.4mm]\arrow[d,xshift=0.4mm] &\tu{TVS}_k \arrow[l,"{\tu{vN}}",shift left] \arrow[r,"{t^\infty}",yshift=0.4mm]\arrow[d,xshift=0.4mm] & \BTVS_k \arrow[l,"i",shift left]\arrow[d,xshift=0.4mm]     \\
\Ind(\tu{Norm}_k)   \arrow[r,"{\colim}",yshift=0.4mm]\arrow[d,xshift=0.4mm]\arrow[u,shift left]  &\tu{SBorn}_k   \arrow[l,"\diss",shift left] \arrow[r,"\gamma",yshift=0.4mm] \arrow[d,xshift=0.4mm]\arrow[u,shift left] &\tu{STVS}_k     \arrow[l,"{\tu{vN}}",shift left] \arrow[r,"{s^\infty}",yshift=0.4mm]\arrow[d,xshift=0.4mm]\arrow[u,shift left]  & \tu{BSTVS}_k \arrow[l,"i",shift left]\arrow[d,xshift=0.4mm]\arrow[u,shift left] \\
\Ind(\Ban_k)   \arrow[r,"{\colim}",yshift=0.4mm]\arrow[u,shift left]  &\tu{CBorn}_k   \arrow[l,"\diss",shift left] \arrow[r,"\gamma",yshift=0.4mm]\arrow[u,shift left]  &\tu{CTVS}_k  \arrow[l,"{\tu{vN}}",shift left] \arrow[r,"{c^\infty}",yshift=0.4mm]\arrow[u,shift left]   & \Conv_k \arrow[l,"i",shift left]\arrow[u,shift left] 
\end{tikzcd}
\]
originating from the category $\Born_k$ of bornological vector spaces and the category $\tu{TVS}_k$ of locally convex topological vector spaces over the field $k$ of real or complex numbers.  

The vertical downward pointing arrows are separation and completion functors respectively and their right adjoints are inclusions.  The categories in the left hand column are ind-categories of semi-normed, normed and Banach spaces respectively and are related to the other categories through the dissection functor $\textit{diss}$.  The categories in the right hand column are the essential images of the left adjoint $\gamma$ to the functor $\tu{vN}$ which associates to a locally convex (resp. separated locally convex, complete locally convex) topological vector space its von Neumann bornology.  

One convenient arena in which to do homotopical algebra is Grothendieck abelian categories.  However, the examples we are interested in, in particular those in the diagram above, are not abelian.  Nevertheless, there does exist a weaker notion in which our examples do reside and whose theory we will exploit.  This is the theory of \textit{Grothendieck quasi-abelian} categories.   

One may wish to start with one of the categories in the third column to set up a theory of differential graded infinite dimensional vector spaces.  It turns out that these categories have a number of shortcomings which hinder their applicability.  In particular, they are not closed symmetric monoidal categories (with respect to the (separated, complete) projective tensor product).   

All of the categories in the diagram are pre-abelian.  The categories in the first two columns satisfy the conditions to be quasi-abelian \cite{Sc}.  Moreover, the quasi-abelian categories in the first two columns satisfy the key technical property of being locally presentable with their class of strict monomorphisms being closed under small filtered colimits.  We call this property \textit{Grothendieck} as it mimics the standard definition of a Grothendieck abelian category to the quasi-abelian setting.

The categories in the final column are not quasi-abelian.  The category $\tu{BTVS}_k$ of bornological topological vector spaces is only semi-abelian (in the sense of Palamodov \cite{Pa}) owing to a counterexample contained in \cite{BD}, and $\tu{BSTVS}_k$, the category of bornological separated topological vector spaces, is not even semi-abelian \cite{SW}.  Nevertheless, the final column have induced model structures on their categories of chain complexes arising from those in the second column.  In particular, there is a Quillen equivalence 
\[   \dg^{\CBorn_k}\rightleftarrows\dg^{\Conv_k}  \]
of chain complexes of complete bornological vector spaces and chain complexes of convenient vector spaces.  

Instead of proving the existence of a model category structure on the category of chain complexes in some of these quasi-abelian categories separately, we will prove a general existence theorem for the category of chain complexes in any Grothendieck quasi-abelian category.  All our examples of interest satisfy these conditions but this result may also be useful for other quasi-abelian categories not included in the diagram above.  Our model categories will then be shown to have the advantages of being closed symmetric monoidal.  

In analogy with the abelian case, we define both an injective and projective model structure.  In each of these cases the weak equivalences are refined quasi-isomorphisms.  These are morphisms which are isomorphisms on the abelian envelope of the internal cohomology.  Using this notion of weak equivalence we prove a Quillen equivalence
\[     \dg^{\Ind(\SNorm_k)}\rightleftarrows\dg^{\Born_k} \]
of model categories.  Here $\dg^C$ denotes the model category of chain complexes in a Grothendieck quasi-abelian category $C$.  This result holds for the subcategories in the other horizontal lines in the first two columns of the above diagram. 

Under certain conditions, the category of commutative differential graded algebras in a Grothendieck quasi-abelian category can be endowed with a model structure.  This structure is needed to define (affine) derived stacks in the infinite dimensional setting which is useful for understanding the kinds of moduli problems arising in field theory.  We prove a related result for dg-Lie algebras in the quasi-abelian setting which is used to show that the cofibrant replacement of a commutative monoid object in a Grothendieck quasi-abelian category can be given by a Chevalley-Eilenberg resolution.  We prove that another cofibrant replacement is given by a Koszul resolution in a different model category.  As above, there is a chain of Quillen equivalences
\[     \cdga^{\Ind(\SNorm_k)}\rightleftarrows\cdga^{\Born_k}  \]
of model categories where $\cdga^C$ denotes the model category of commutative differential graded algebras in a Grothendieck quasi-abelian category $C$.

\section*{Relation to other work }
An injective model structure on the category of (bounded) chain complexes in a quasi-abelian category was already constructed in \cite{Bu}.  We will apply stronger conditions on a quasi-abelian category in order to produce a \textit{combinatorial} model structure making it suitable to study module and algebra objects efficiently.  

Recent work of \cite{Ke} constructs model category structures on certain exact categories which coincide with our own  (although the proofs are different).  However, our presentation differs from \textit{loc.cit.} in that we emphasize the specific properties arising when the category in question is Grothendieck.  All the examples we have in mind satisfy this stricter assumption so it makes sense to exploit the general theory, and the simplifications it affords, in this context.

\section*{Notation}

In this paper we will often be confronted with set theoretic issues.  In order not to burdon the notation, we will take the pragmatic approach of fixing here a Grothendieck universe $\bb{U}$ and calling elements therein \textit{$\bb{U}$-small}.  We then fix universes $\bb{V}$ and $\bb{W}$ such that $\bb{U}\in\bb{V}\in\bb{W}$ and refer to elements in $\bb{V}$ as \textit{$\bb{V}$-small} or \textit{large} and those in $\bb{W}$ as \textit{very large}.  We will leave it to the reader to supplement the terms small limits, small colimits and locally presentable category to $\bb{U}$-small limits, $\bb{U}$-small colimits and $\bb{U}$-locally presentable category and likewise for $\bb{V}$ and $\bb{W}$.  We also assume the Vopenka principle which ensures that a full reflective subcategory of a locally presentable category is itself locally presentable.

\section*{Acknowledgements}

Special thanks is due to Jack Kelly and Federico Bambozzi for feedback on a preliminary version of this text.  I would also like to thank Bertrand To\"en for comments.  This work was partially supported by the World Premier International Research Center Initiative (WPI), MEXT, Japan.

\section{Grothendieck quasi-abelian categories}\label{qac}

Let $k$ be the field of real or complex numbers.  Recall that a convex bornological vector space is a $\bb{U}$-small vector space $V$ endowed with a bornology $\sB_V$ (which we will often omit from the notation) whose elements are called bounded subsets \cite{H-N}.  A convex bornological vector space is said to be separated if all bounded disks are norming and complete if each bounded subset is contained in a complete bounded disk.  

We will denote the large category of convex bornological vector spaces and bounded linear maps by $\Born_k$ and the full subcategories of separated and complete objects by $\tu{SBorn}_k$ and $\tu{CBorn}_k$ respectively.  The latter is equivalent to the large category $\tu{CTVS}_k$ of complete locally convex topological vector spaces over $k$ with bounded (as opposed to continuous) linear maps.  All bornological vector spaces will be henceforth convex so objects in $\Born_k$ will be simply called \textit{bornological vector spaces}.  For any object $V$ in $\Born_k$, the canonical morphism
\[      \colim_{B\in\sD_V} V_B\ra V   \]
is an isomorphism where $V_B$ is the linear hull of $B$ endowed with the gauge semi-norm and $\sD_V$ is the collection of disks in $V$.  

Let $(V,\sB_V)$ and $(W,\sB_W)$ be bornological vector spaces.  A subset $S$ of the vector space $\Hom(V,W)$ is called equibounded if $\{f(x)|f\in S,x\in B\}\in\sB_W$ for all $B\in\sB_V$.  The bornology of equibounded maps turns the category of bornological (resp. separated bornological, complete bornological) vector spaces into a cartesian closed category.  We denote by $\uHom(V,W)$ the internal hom object between $V$ and $W$.

Let $\tu{TVS}_k$ denote the large category of locally convex $\bb{U}$-small topological vector spaces, hereafter called \textit{topological vector spaces}, over $k$ and continuous linear maps.  Let
\[   \tu{\tu{vN}}:\tu{TVS}_k\ra\Born_k   \]
denote the functor which associates to $M$ its von Neumann bornology where a subset is bounded if and only if it is absorbed by every neighborhood of the origin in $M$.  The bornological vector space $\tu{vN}(M)$ is often called the \textit{bornologification} of $M$.  There exists a fully faithful left adjoint which associates to $V$ a topological vector space $\gamma(V)$ with basis those subsets of the neighborhood of the origin that absorb bounded sets.  This is the finest topology whose von Neumann bornology coincides with the original one, or equivalently, the unit map of the adjunction is bounded.  

Therefore, we define a \textit{bornological topological vector space} to be a topological vector space $M$ such that $\gamma\circ \tu{vN}(M)\simeq M$ is an isomorphism.  Denote the essential image of $\gamma$ by $\tu{BTVS}_k$.  The essential image of $\gamma$ on the subcategory $\tu{SBorn}_k$ of separated bornological vector spaces will be denoted $\tu{BSTVS}_k$ and on the subcategory $\tu{CBorn}_k$ by $\Conv_k$.  The objects in $\BSTVS_k$ will be called \textit{bornological separated topological vector spaces} and objects in $\Conv_k$, \textit{convenient vector spaces}.  See \cite{KM} for the theory of convenient vector spaces.

Recall that there exists a \textit{dissection functor}
\[     \tu{diss}:\Born_k\ra\Ind(\tu{SNorm}_k)  \]
which sends a bornological vector space to an inductive system of $\bb{U}$-small semi-normed spaces as follows.  Let $V$ be a bornological vector space and $(\sD_V,\leq)$ the directed set of bounded disks $B$ in $V$ partially ordered by absorption.  Then $\tu{diss}(V):=(V_B)_{B\in\sD_V}$ is an inductive system of semi-normed spaces.  The dissection functor is fully faithful.  

There exists a left adjoint to the dissection functor which sends an inductive system $(V_B)_{B\in\sD(V)}$ to $\colim_{B\in\sD_V}V_B$.  The dissection functor on the subcategory of separated objects defines a full embedding into the category $\Ind(\tu{Norm}_k)$ of $\bb{V}$-small ind-objects in the large category $\tu{Norm}_k$ of $\bb{U}$-small normed spaces and bounded linear maps.  Moreover, on the subcategory of complete objects, it defines a full embedding into the category $\Ind(\Ban_k)$ of ind-objects in the large category $\Ban_k$ of Banach spaces when we replace the directed set $(\sD_V,\leq)$ by the directed set of complete bounded disks in $V$.  The essential image of these dissection functors consists of inductive systems $(V_i,\alpha_i)$ such that each $\alpha_i$ is a monomorphism.

All of the examples above share the following underlying structure~:

\begin{dfn}
A \textit{pre-abelian} category is an additive category with kernels and cokernels.  
\end{dfn}

It follows that a pre-abelian category admits finite limits and colimits.  In any category with kernels and cokernels, a morphism is said to be \textit{strict} if its coimage is isomorphic to its image, ie. a morphism $f:x\ra y$ is strict if the induced morphism
\[  \tu{Coker}(\ker(f))=:\tu{Coim}~f\xra{\tilde{f}}\tu{Im}~f:=\tu{Ker}(\coker(f))    \]
is an isomorphism.  In any pre-abelian category, pullbacks preserve strict monomorphisms (kernels) and pushouts preserve strict epimorphisms (cokernels).  Allowing strict epimorphisms (resp. strict monomorphisms) to be stable under pullbacks (resp. pushouts) leads to the following definition \cite{Sc}.

\begin{dfn}
A pre-abelian category is said to be \textit{quasi-abelian} if its class of strict monomorphisms is stable under pushouts along arbitrary morphisms and its class of strict epimorphisms is stable under pullback along arbitrary morphisms.  
\end{dfn}

Therefore, in a quasi-abelian category, all strict monomorphisms and all strict epimorphisms are stable under pullbacks and pushouts.  Moreover, every morphism in a quasi-abelian category has a canonical decomposition into a strict epimorphism (resp. epimorphism) followed by a monomorphism (resp. strict monomorphism).     

\begin{ex}
The quasi-abelian category $\Born_k$ of bornological vector spaces is quasi-abelian \cite{PS}.  In this category, strict monomorphisms correspond to bornological isomorphisms onto their image with the subspace bornology.  On the other hand, strict epimorphisms are maps such that any bounded subset in the codomain is the image of a bounded subset.  
\end{ex}

A quasi-abelian category is said to be \textit{semi-abelian} if for every morphism $f$, the induced morphism $\tilde{f}$ is both an epimorphism and a monomorphism.  Every quasi-abelian category is semi-abelian \cite{Sc}.  However, not every strict monomorphism or strict epimorphism is necessarily stable under pushouts or pullbacks respectively.

\begin{ex}
The pre-abelian category $\BTVS_k$ of bornological topological vector spaces is semi-abelian \cite{Si} but not quasi-abelian.  The authors in \cite{BD} show that strict epimorphisms are not stable under pullback.
\end{ex}

In order to prove that there exists a model structure on the category of chain complexes in a quasi-abelian category we will need to introduce some further assumptions.  An abelian category is said to be Grothendieck if it is locally presentable and its collection of monomorphisms is stable under filtered colimits.  The analogue in our case is closure under filtered colimits of strict monomorphisms.  Therefore we make the following definition.

\begin{dfn}\label{groth}
Let $C$ be a quasi-abelian category.  Then $C$ is said to be \textit{Grothendieck} if it is locally presentable and its collection of strict monomorphisms is closed under small filtered colimits.
\end{dfn}

\begin{lem}\label{modisgqa}
Let $C$ be a (symmetric) monoidal Grothendieck quasi-abelian category and $R$ a (commutative) monoid object in $C$.  Then the category $\Mod_R(C)$ of $R$-modules in $C$ is a Grothendieck quasi-abelian category.
\end{lem}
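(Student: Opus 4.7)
The strategy is to transport each property in Definition~\ref{groth} across the free--forgetful adjunction $R\otimes - \dashv U$, where $U:\Mod(R)\to C$ is the forgetful functor. The basic input is that $U$ is conservative, preserves all limits (as a right adjoint), and also preserves all colimits that commute with the tensor product on both sides; in particular, in the closed monoidal setting, $U$ creates all small colimits, and kernels, cokernels, pullbacks and pushouts of $R$-modules are computed in $C$ with the induced $R$-action.

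First I would verify $\Mod(R)$ is quasi-abelian. Kernels and cokernels exist in $\Mod(R)$ because they exist in $C$ and are stable under the $R$-action. A morphism $f$ in $\Mod(R)$ is strict (respectively a strict monomorphism, a strict epimorphism) if and only if $U(f)$ is: the canonical coimage--image factorization of $f$ coincides with that of $U(f)$ equipped with its natural $R$-module structure, and $U$ is conservative. Since pullback and pushout squares in $\Mod(R)$ are sent by $U$ to pullback and pushout squares in $C$, the two quasi-abelian axioms (pullbacks preserve strict epimorphisms, pushouts preserve strict monomorphisms) follow at once from the same axioms for $C$.

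Next I would address local presentability. The endofunctor $T = R\otimes -$ on $C$ is an accessible monad, since $\otimes$ preserves filtered colimits in each variable, and $\Mod(R)$ is its Eilenberg--Moore category. By a standard theorem on monadic functors over locally presentable categories (see for example Theorem~2.78 in Ad\'amek--Rosick\'y), $\Mod(R)$ is then locally presentable. Closure of strict monomorphisms under filtered colimits is then immediate: filtered colimits in $\Mod(R)$ are created by $U$, a filtered colimit of strict monomorphisms in $\Mod(R)$ becomes a filtered colimit of strict monomorphisms in $C$, and the Grothendieck hypothesis on $C$ delivers the conclusion there, which transports back through $U$.

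The main step to handle with care is the interaction between $U$ and the canonical decomposition into strict epimorphism followed by monomorphism (and its dual). Once it is verified that $U$ preserves and reflects coimages and images --- which rests on the monoidal structure being cocontinuous in each variable so that $U$ truly creates the relevant colimits --- the rest of the argument is essentially formal, and the proof collapses to three applications of the corresponding property of $C$.
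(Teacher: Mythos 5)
Your proof is correct and takes essentially the same route as the paper, whose entire argument is the one-line observation that the forgetful functor $\Mod(R)\ra C$ preserves all limits and colimits; you have simply made explicit the details this assertion encodes (strictness detected and reflected by the forgetful functor, local presentability via the accessible-monad theorem of Ad\'amek--Rosick\'y, filtered colimits created in $C$). Your caveat that the tensor product must be cocontinuous (or at least suitably accessible) in each variable is exactly the hypothesis the paper uses implicitly, since all its examples are closed monoidal.
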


\begin{proof}
For any locally presentable category $C$, the category of modules over any commutative monoid object in $C$ is locally presentable.  The result now follows from the fact that the forgetful functor from $\Mod(R)$ to $C$ preserves all limits and colimits.  
\end{proof}

An important class of examples comes from ind-objects in quasi-abelian categories and subcategories thereof.  Recall that a subcategory is said to be \textit{reflective} if the natural inclusion admits a left adjoint.

\begin{prop}\label{ind}
Let $C$ be a finitely cocomplete quasi-abelian category with exact filtered colimits.  Then the following hold~:
\begin{enumerate}
\item The category $\Ind(C)$ of ind-objects in $C$ is a Grothendieck quasi-abelian category.
\item Any full reflective subcategory of $\Ind(C)$ is a Grothendieck quasi-abelian category.
\end{enumerate}
\end{prop}

\begin{proof}
Let $C$ be a category satisfying the assumptions of the proposition.  Then the category $\Ind(C)$ is quasi-abelian and cocomplete.  Any cocomplete category of the form $\Ind(C)$ is locally presentable by definition.  Closure under small filtered colimits of strict monomorphisms follows from the assumption on $C$.  Indeed, if filtered colimits are exact in $C$ they are exact in $\Ind(C)$.  Let $\Ind(C)^{\Delta^1}$ denote the category of morphisms in $\Ind(C)$ and consider the colimit functor 
\[  \colim_I:(\Ind(C)^{\Delta^1})^I\ra\Ind(C)^{\Delta^1}  \] 
for a filtered category $I$.  Since filtered colimits are exact in $\Ind(C)^{\Delta^1}$ we have a diagram
\[
\begin{tikzcd}
\tu{Coim}(\colim_IF) \arrow[r]\arrow[d]  &\tu{Im}(\colim_I F)\arrow[d]\\
\colim_I\tu{Coim}(F)  \arrow[r]  &\colim_I\tu{Im}(F)
\end{tikzcd}
\]
of objects in $\Ind(C)$ for $F$ in $(\Ind(C)^{\Delta^1})^I$ where the vertical arrows are equivalences.  Since the bottom arrow is an equivalence by assumption, the top horizontal arrow is an equivalence and thus the collection of strict monomorphisms in $\Ind(C)$ is closed under filtered colimits as required.

By Proposition~1.39 of \cite{AR}, any full reflective subcategory of a locally presentable category which is closed under filtered colimits is itself locally presentable.  Since left adjoints preserve colimits and $\Ind(C)$ is Grothendieck quasi-abelian, any full reflective subcategory is likewise Grothendieck quasi-abelian.
\end{proof}

All of the ind-categories of interest to us satisfy the conditions of Definition~\ref{groth}.  Recall that $\SNorm_k$ is the category whose objects are semi-normed spaces $(V,\rho_V)$ and a morphism between $(V,\rho_V)$ and $(W,\rho_W)$ is a morphism $f:V\ra W$ of vector spaces such that $|\rho_W\circ f|\leq c\rho_V$ for some $c>0$.  Equivalently, a morphism is a continuous morphism of locally convex topological vector spaces for the canonical topology induced by the semi-norm.  We will emit the semi-norm from the notation and refer simply to $V$.

\begin{prop}\label{snormgqa}
The categories $\Ind(\SNorm_k)$, $\Ind(\Norm_k)$ and $\Ind(\Ban_k)$ are Grothendieck quasi-abelian.
\end{prop}

\begin{proof}
The category $\SNorm_k$ is quasi-abelian by Proposition~3.2.4 of \cite{Sc}.  We now verify the conditions of Proposition~\ref{ind} and show that filtered colimits are exact in $\SNorm_k$.  We will show that the functor $\colim_I:(\SNorm_k)^I\ra\SNorm_k$ preserves monomorphisms for a filtered category $I$ and leave the remaining steps to the reader.  Let $\alpha:F\ra G$ be a monomorphism in $(\SNorm_k)^I$ and $v\in\colim_IF$ such that $\colim_I(\alpha)(v)=0$ in $\colim_IG$.  We need to show that $v=0$.  We know that $v$ is the image of some $v_i\in F(i)$ and therefore $\alpha_i(v_i)\in G(i)$ vanishes in $\colim_IG$.  Therefore there exists a map $u:i\ra j$ such that $G(u)(\alpha_i(v_i))=\alpha_{j}(F(u)(v_i))=0$ in $G(j)$.  Since $\alpha_j$ is a monomorphism, then $F(u)(v_i)=0$ in $F(j)$ and $v=0$ in $\colim_IF$ as required.  Since the category $\SNorm_k$ is cocomplete, the category $\Ind(\SNorm_k)$ is Grothendieck quasi-abelian by Proposition~\ref{ind}.

The categories $\Norm_k$ and $\Ban_k$ are quasi-abelian since they are, by definition, full additive subcategories of $\SNorm_k$ and by Proposition~3.2.16 of \cite{Sc} admit kernels and cokernels.  Since $\Norm_k$ and $\Ban_k$ have finite colimits, it follows from Proposition~\ref{ind} that $\Ind(\Norm_k)$ and $\Ind(\Ban_k)$ are locally presentable.  They are moreover Grothendieck since the separation and completion functors are left adjoints and thus preserve colimits.  
\end{proof}

\begin{prop}\label{bornisgqa}
The categories $\Born_k$, $\SBorn_k$ and $\CBorn_k$ are Grothendieck quasi-abelian categories.
\end{prop}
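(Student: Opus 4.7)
The plan is to reduce the six cases to the three bornological ones $\Born_k$, $\tu{SBorn}_k$, $\tu{CBorn}_k$, and then transfer each of these to a full reflective subcategory of the ind-categories already handled in Propositions~\ref{snormgqa} and~\ref{indcatqa}. Quasi-abelianness of the bornological categories is classical and due to Prosmans--Schneiders, so the real work lies in establishing local presentability and closure of strict monomorphisms under filtered colimits.

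Since the functor $\gamma$ is recalled in the text to be fully faithful, the three topological categories $\BTVS_k$, $\BSTVS_k$ and $\Conv_k$ are by construction equivalent to $\Born_k$, $\tu{SBorn}_k$ and $\tu{CBorn}_k$ respectively, via $\gamma$ and $\tu{vN}$. Being Grothendieck quasi-abelian is preserved by equivalence, so it suffices to treat the three bornological categories.

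For local presentability, I would exploit the dissection functors
\[   \tu{diss}:\Born_k\hookrightarrow\Ind(\SNorm_k),\quad \tu{SBorn}_k\hookrightarrow\Ind(\Norm_k),\quad \tu{CBorn}_k\hookrightarrow\Ind(\Ban_k),   \]
which are recorded in the excerpt as fully faithful right adjoints to $\colim$. Hence each bornological category is a full reflective subcategory of the corresponding ind-category, which is locally presentable by Propositions~\ref{snormgqa} and~\ref{indcatqa}. The Vopenka principle invoked in the Notation section then passes local presentability to the reflective subcategory.

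For the closure property, the key point is that the excerpt identifies the essential image of each dissection functor as the full subcategory of inductive systems $(V_i,\alpha_i)$ whose transition maps $\alpha_i$ are monomorphisms. This essential image is closed under filtered colimits in the ambient ind-category, because filtered colimits are exact there (Proposition~\ref{ind}) and hence preserve monomorphisms. Consequently filtered colimits in $\Born_k$ may be computed inside $\Ind(\SNorm_k)$, and under the equivalence $\Born_k\simeq\tu{ess.im}(\tu{diss})$ the strict monomorphisms of $\Born_k$ (i.e.\ bornological embeddings, as described in the text) correspond exactly to the strict monomorphisms of $\Ind(\SNorm_k)$ between objects of the essential image. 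Closure under filtered colimits of strict monomorphisms in $\Born_k$ is then inherited from Proposition~\ref{snormgqa}, and the same argument applies verbatim for $\tu{SBorn}_k$ and $\tu{CBorn}_k$ using $\Ind(\Norm_k)$ and $\Ind(\Ban_k)$.

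The main obstacle I expect is the identification in the last step: verifying that a morphism between objects of the essential image is a strict monomorphism in $\Born_k$ if and only if its image under $\tu{diss}$ is a strict monomorphism in $\Ind(\SNorm_k)$. This requires matching the explicit description of strict monos on each side (bornological subspace embeddings versus levelwise strict monos in an ind-system), but it is well-adapted to the characterizations available in \cite{Sc} and~\cite{P1}.
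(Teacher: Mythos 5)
Your proposal agrees with the paper's proof on three of its four steps: quasi-abelianness of the bornological categories via Prosmans--Schneiders, the reduction of $\BTVS_k$, $\BSTVS_k$ and $\Conv_k$ to the bornological cases through the equivalence induced by $\gamma$ and $\tu{vN}$, and local presentability via exhibiting $\Born_k$, $\SBorn_k$, $\CBorn_k$ as full reflective subcategories of the locally presentable ind-categories of Propositions~\ref{snormgqa} and~\ref{indcatqa} (the paper invokes cocompleteness from \cite{PS} together with the closed-under-colimits criterion rather than Vopenka, an immaterial difference). The genuine gap is in your treatment of the Grothendieck condition. Your key claim --- that the essential image of $\diss$, i.e.\ the reduced ind-systems, is closed under filtered colimits in the ambient ind-category, so that filtered colimits in $\Born_k$ may be computed inside $\Ind(\SNorm_k)$ --- is false, and your justification conflates two different things: exactness of filtered colimits says that $\colim_I$ preserves monomorphisms \emph{as morphisms of} $\Ind(\SNorm_k)$, but says nothing about the transition maps of a system presenting the colimit. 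In a filtered diagram whose objects are reduced systems, the connecting morphisms of the diagram itself need not be monomorphisms (in the Grothendieck condition one takes a filtered diagram in the arrow category, with arbitrary transition maps), and flattening such a diagram produces an ind-object that need not be isomorphic to any reduced system. Concretely, let $V=c_{00}$ with the sup seminorm and $S:V\ra V$ the shift $(x_1,x_2,\ldots)\mapsto(x_2,x_3,\ldots)$. Each single-object system $(V)$ lies in the essential image, but the colimit in $\Ind(\SNorm_k)$ of the sequence $(V)\xra{S}(V)\xra{S}\cdots$ is the ind-object $(V,S)$, which is nonzero (no power of $S$ is zero, so its identity does not vanish), whereas $\colim(V,S)=V/\bigcup_n\ker S^n=0$ in $\Born_k$; if $(V,S)\simeq\diss(B)$ then $B\simeq\colim\diss(B)\simeq 0$ and hence $(V,S)\simeq 0$, a contradiction. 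Equivalently, closure of the essential image under filtered colimits would say precisely that $\diss$ preserves filtered colimits, which a right adjoint to $\colim$ need not, and as this example shows does not, do.

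The paper's mechanism goes the other way around and avoids this problem: since $\diss$ is a fully faithful right adjoint, a filtered colimit in $\Born_k$ is computed as the reflection, under the colimit-preserving left adjoint $\colim$, of the corresponding filtered colimit in $\Ind(\SNorm_k)$. One then uses that $\diss$, preserving limits, carries strict monomorphisms (which are kernels in a quasi-abelian category) to strict monomorphisms, applies Proposition~\ref{snormgqa} (resp.\ Proposition~\ref{indcatqa}) in the ind-category, and transports the result back along the reflection --- this is what the paper compresses into the sentence that the dissection functor is a right adjoint and its left adjoint preserves colimits. (To be fair, the paper is itself terse here: one still needs the reflection to preserve strict monomorphisms, which requires an exactness property of $\colim$ from \cite{PS} and not merely that it is a left adjoint.) But the route you chose, computing the filtered colimit in the ambient ind-category without reflecting, is not available; to repair your argument you would have to replace the closure claim by the reflection argument just described.
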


\begin{proof}
The category of bornological (separated bornological, complete bornological) vector spaces over $k$ is quasi-abelian by Proposition~1.8 (resp. Proposition~4.10, Proposition~5.6) of \cite{PS}.  From Proposition~\ref{snormgqa}, the categories $\Ind(\SNorm)_k$, $\Ind(\Norm_k)$ and $\Ind(\Ban_k)$ are locally presentable and the dissection functor is fully faithful.  By Proposition~1.9 (resp. Proposition~4.12, Proposition~5.6) of \cite{PS}, the category $\Born_k$ (resp. $\SBorn_k$, $\CBorn_k$) is cocomplete.  Every full reflective subcategory of a locally presentable category which is closed under colimits is locally presentable. 
Therefore $\tu{Born}_k$, $\SBorn_k$ and $\CBorn_k$ are locally presentable quasi-abelian.  These categories are moreover Grothendieck since the dissection functor is right adjoint and thus its left adjoint preserves colimits.
\end{proof}

A final example, important for applications, is the category of Fr\'echet spaces.  Let $\tu{Fr\'e}_k$ denote the full additive subcategory of $\CTVS_k$ spanned by Fr\'echet spaces.

\begin{prop}
The category $\tu{Fr\'e}_k$ is Grothendieck quasi-abelian.
\end{prop}

\begin{proof}
The category $\tu{Fr\'e}_k$ of Fr\'echet spaces is additive by definition.  For any morphism $f:V\ra W$ in $\tu{Fr\'e}$, the kernel of $f$ is the subspace $f^{-1}(0)$ of $V$ endowed with the induced topology and the cokernel of $f$ is the quotient space $W/\overline{f(V)}$ endowed with the quotient topology.  Therefore, the category $\tu{Fr\'e}_k$ is pre-abelian.  It follows from Proposition~4.4.5 of \cite{P2} that it is quasi-abelian.  The bornologification functor $\tu{vN}:\CTVS_k\ra\CBorn_k$ is fully faithful on the subcategory of Fr\'echet spaces.  Therefore the composition of functors $\tu{diss}\circ\tu{vN}$ exhibits $\tu{Fr\'e}_k$ as a full reflective subcategory of the locally presentable category $\Ind(\Ban_k)$ and thus by Proposition~\ref{ind}, $\tu{Fr\'e}_k$ is locally presentable.
\end{proof}

\section{Differential graded modules}\label{dgm}

In this section we will set up the homotopy theory of infinite dimensional differential graded vector spaces.  We will use the category of complete bornological vector spaces as an example for illustration.  We first define a general notion of quasi-isomorphism for a quasi-abelian category.

Let $(M,d)$ be a cochain complex of complete bornological vector spaces.  Since the category of complete bornological vector spaces is quasi-abelian, it makes sense to consider the quotient vector space $\tu{Ker}(d^{n+1})/\tu{Im}(d^{n})$ with the quotient bornology.  It is then tempting to define a cohomology functor sending $M$ to this quotient.  However, this quotient bornology is not necessarily complete.  What does make sense is the following.  Let $V$ be a complete bornological vector space and $W$ a subspace of $V$.  Then $W$ is said to be \textit{closed} if limits of sequences in $W$ which converge in $V$ belong to $W$.  The \textit{closure} $\overline{W}$ of $W$ is the intersection of all the closed subspaces of $V$ containing $W$.  The kernel $\tu{Ker}(d^n)$ is complete.  Therefore it follows from \cite{H-N} that the quotient vector space $\tu{Ker}(d^{n+1})/\overline{\tu{Im}}(d^{n})$ with the quotient bornology is complete.  Although this latter notion of cohomology is natural, it only applies then the image of $d_n$ has closed range.  The remedy, in general, is to use the abelian envelope of a quasi-abelian category.

Let $C$ be a quasi-abelian category.  Then there exists a pair $(\cA(C),a)$ consisting of an abelian category $\cA(C)$ together with a full embedding 
\[  a:C\ra\cA(C)  \] 
called the \textit{abelianization functor} satisfying the following universal property~: for any abelian category $D$, the functor $a$ induces an equivalence
\[    \uHom^{lex}(\cA(C),D)\ra\uHom^{lex}(C,D)   \]
of categories where the superscript \textit{lex} refers to left exact functors.  By the enriched Yoneda lemma, an explicit construction of $\cA(C)$ is given by the abelian category $\cA(C)=\uHom^{lex}(C,\Ab)$ of left exact functors where $\Ab$ is the category of abelian groups.  The category $\cA(C)$ is called the \textit{abelian envelope} of $C$.  

Let $C$ be a quasi-abelian category and $\dg^C$ the category of cochain complexes in $C$.  We define a cohomology functor
\[     H^n:\dg^C\ra C   \]
by sending $M$ to $\coker(\tu{Im}~d^n\ra\tu{Ker}~d^{n+1})$ in $C$.  Using the abelian envelope construction, we have a second cohomology functor
\[    \cH^n:\dg^C\ra\cA(C)   \]
sending $M$ to $H^n(a_*(M))$ in the abelian category $\cA(C)$ where $a_*:\dg^C\ra\dg^{\cA(C)}$ is the induced functor.  We call this functor the \textit{refined cohomology}.

\begin{dfn}\label{qi}
Let $C$ be a quasi-abelian category.  Then a morphism in $\dg^C$ is said to be a \textit{refined quasi-isomorphism} if it is an isomorphism on refined cohomology.
\end{dfn}

Note that when $M$ is a strict complex, ie. all the maps in the complex are strict, then there exists an isomorphism
\[   \cH^n(M)\simeq a(H^n(M))   \]
in $\cA(C)$ between the refined cohomology and the abelianization of the cohomology.  

A morphism of cochain complexes in an abelian category is an isomorphism on cohomology if and only if its mapping cone is acyclic.  An analogous definition also applies to quasi-abelian categories.   Let $C$ be a quasi-abelian category.  A null sequence 
\[    V\xra{f} W\xra{g} X  \]
in $C$ is said to be \textit{strictly exact} if $f$ is strict and the canonical morphism 
\[  \tu{Im}(f)\ra\tu{Ker}(g)  \] 
is an isomorphism.  It is said to be \textit{strictly coexact} if $g$ is strict and the canonical morphism 
\[ \tu{Coker}(f)\ra\tu{Coim}(g)  \] 
is an isomorphism.  The sequence is said to be \textit{acyclic} if it is both strictly exact and strictly coexact.  A cochain complex in $C$ is said to be \textit{acyclic} if it is acyclic in each degree.  It follows from Proposition~2.39 of \cite{Ke} that a morphism in $\dg^C$ is a quasi-isomorphism if and only if its mapping cone is acyclic.

The following is a weaker version of the analogous statement for cochain complexes in a Grothendieck abelian category (see for example Proposition~3.13 of \cite{Be}).  The following notation will be used throughout this article.

\begin{notn}\label{notation}
Let $C$ be a quasi-abelian category.  For any morphism $u:V\ra W$ in $C$, let 
\[     C[u,n]:=\cdots\ra 0\ra V\xra{u} W\ra 0\ra\cdots  \]
be the complex in $\dg^C$ with $V$ in degree $n$ and $W$ in degree $(n+1)$ and zero otherwise. 
\end{notn}

\begin{prop}\label{injective}
Let $C$ be a Grothendieck quasi-abelian category.  There exists a left proper combinatorial model structure on the category $\dg^C$ of cochain complexes in $C$ with the following classes of morphisms~:
\begin{itemize}
\item[$(\sC)$] The cofibrations are the degreewise strict monomorphisms.
\item[$(\sW)$] The weak equivalences are the refined quasi-isomorphisms.
\item[$(\sF)$] The fibrations are the those maps with the right lifting property with respect to trivial cofibrations.
\end{itemize}
\end{prop}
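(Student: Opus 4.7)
The plan is to apply Jeff Smith's recognition theorem for combinatorial model categories (Proposition~A.2.6.13 of \cite{L1}). This requires producing a set $I$ of generating cofibrations whose weak saturation $\tu{cof}(I)$ is precisely the class of degreewise strict monomorphisms, verifying that the class $\sW$ of reduced quasi-isomorphisms is closed under filtered colimits and satisfies the two-out-of-three axiom, and finally showing that $\sW\cap\tu{cof}(I)$ is closed under pushouts and transfinite compositions. Once these are in place Smith's theorem delivers the model structure, and the fibrations are then characterised as in $(\sF)$ by definition.

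First I would dispose of the preliminaries. Local presentability of $\tu{Ch}(C)$ follows from that of $C$. The two-out-of-three axiom for $\sW$ is immediate from functoriality of the reduced cohomology $H^n$. To see that $\sW$ is closed under filtered colimits it suffices to check that $H^n$ commutes with filtered colimits in $C$. The outer cokernel in the defining expression $H^n(M)=\coker(M^n\ra\tu{Ker}\,d_{n+1})$ always does, and for the kernel one repeats the diagram chase from Proposition~\ref{ind} using that $\tu{Ker}\,d_{n+1}\hookrightarrow M^{n+1}$ is a strict monomorphism and that Definition~\ref{groth} forces the canonical comparison map $\colim_I\tu{Ker}\,d_{n+1}\ra\tu{Ker}\,\colim_I d_{n+1}$ to be an isomorphism.

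Next I would construct $I$. Since $C$ is locally presentable it admits a set $\{G_\alpha\}$ of $\kappa$-presentable strong generators. Following Lurie's construction in \cite{L2}, let $D^n(G_\alpha)$ denote the contractible two-term complex with $G_\alpha$ in degrees $n$ and $n+1$ joined by the identity, and let $I$ consist of the maps $0\hookrightarrow D^n(G_\alpha)$ together with all strict monomorphisms $A\hookrightarrow D^n(G_\alpha)$ between $\kappa$-presentable subcomplexes. A density argument analogous to Lurie's Lemma~1.3.5.10, which relies crucially on Definition~\ref{groth} to express any strict monomorphism as a filtered colimit of strict monomorphisms between $\kappa$-presentable objects, together with the small object argument, then identifies $\tu{cof}(I)$ with the degreewise strict monomorphisms.

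The main obstacle, and the step I expect to dominate the work, is verifying that $\sW\cap\tu{cof}(I)$ is closed under pushouts and transfinite composition. The key technical input is a long exact sequence in reduced cohomology associated to a degreewise conflation of complexes in $C$, which I would obtain by adapting the snake lemma for quasi-abelian categories of \cite{Sc} together with the identification of $\tu{Ch}(C)$ as an exact category. Granting this, closure under pushouts follows because pushouts in a quasi-abelian category preserve strict monomorphisms, so the cokernel of a pushed-out trivial cofibration agrees with that of the original one, and the long exact sequence then forces the pushout to remain a reduced quasi-isomorphism. Closure under transfinite composition reduces to the preservation of $H^n$ under filtered colimits established above, and Smith's theorem then produces the desired model structure.
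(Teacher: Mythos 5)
Your overall strategy is the paper's: both arguments run through the recognition principle of Proposition~A.2.6.13 of \cite{L1}, with generating cofibrations given by strict monomorphisms into disk-type two-term complexes on a generator (the paper takes $x=\oplus x_i$ and the maps $M(u,n)\ra M(\id_x,n)$ for $u:y\ra x$ a strict monomorphism, which are precisely strict subcomplex inclusions into the disk on $x$; your $A\hookrightarrow D^n(G_\alpha)$ is the same construction spread over the individual generators), the Grothendieck hypothesis of Definition~\ref{groth} entering exactly where you invoke it (commutation of reduced cohomology with filtered colimits, and the saturation argument identifying $\tu{cof}(I)$ with the degreewise strict monomorphisms), and a long-exact-sequence argument for the pushout condition --- your plan to extract that sequence from the snake lemma of \cite{Sc} is in fact more explicit than the paper, which simply transports Lurie's abelian argument to reduced cohomology. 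The genuine gap is that you never verify the hypothesis $\tu{inj}(I)\subseteq\sW$: every morphism with the right lifting property with respect to the generating cofibrations (equivalently, with respect to all degreewise strict monomorphisms) must be shown to be a reduced quasi-isomorphism. This condition appears in \emph{both} standard formulations of Smith's theorem --- it is condition $(3)$ of A.2.6.13, and it is likewise one of the hypotheses of the Beke-style version whose conditions you actually enumerate --- and it cannot be dropped: without it the small object argument factors a map as a cofibration followed by an $I$-injective map, but nothing makes the latter a trivial fibration, and the factorization axioms fail. (Taking $I=\emptyset$ and $\sW$ the reduced quasi-isomorphisms satisfies every condition on your list yet yields no model structure.) The paper devotes its final step to exactly this point, and in the abelian template of Proposition~1.3.5.3 of \cite{L2} it is a genuine argument rather than a formality; you would need to rerun it with strict monomorphisms and reduced cohomology.

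A secondary mismatch: the conditions you propose to check (accessibility and two-out-of-three for $\sW$, identification of $\tu{cof}(I)$, closure of $\sW\cap\tu{cof}(I)$ under pushout and transfinite composition) belong to the Beke variant of Smith's theorem, whereas A.2.6.13, which you cite and the paper uses, instead asks for perfectness of $\sW$ and for stability of \emph{all} weak equivalences (not merely trivial cofibrations) under cobase change along cofibrations --- this is the pushout square the paper analyzes, and it is what makes the resulting model structure left proper --- together with the lifting condition above. Either complete set of hypotheses would do, but as written you verify a proper subset of each; repairing the proposal means choosing one formulation, adding the missing $\tu{inj}(I)\subseteq\sW$ verification, and, if you keep A.2.6.13, upgrading your pushout step to weak equivalences pushed out along arbitrary cofibrations.
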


\begin{proof}
We first need to construct a small set of generating cofibrations $\sC_0$ such that each cofibration belongs to the weakly saturated class generated by $\sC_0$.   Since $C$ is locally presentable we can define an object $V:=\bigoplus V_i$ of $C$ given by the coproduct of the objects $V_i$ which generate $C$ under small colimits.  For every strict monomorphism $u:W\ra V$, we define $\sC_0$ to be the collection of all strict monomorphisms $C[u,n]\ra C[\id_V,n]$ for all $n\in\bb{Z}$.  Arguing as in Proposition~1.3.5.3 of \cite{L2}, using the the fact that strict monomorphisms are stable under filtered colimits in view of $C$ being Grothendieck, we find that every cofibration belongs to the smallest weakly saturated class of morphisms containing $\sC_0$.  Conversely, it is clear that $\sC$ contains $\sC_0$ and is weakly saturated (in particular, strict monomorphisms are closed under the formation of pushouts by definition). 

We will now check the conditions of Proposition~A.2.6.13 of \cite{L1}.  The refined cohomology functor 
\[    \cH:\dg^C\ra\prod_{n\in\bb{Z}}\cA(C)  \]
commutes with filtered colimits since $a_*$ and $H:\dg^{\cA(C)}\ra\prod_{n\in\bb{Z}}\cA(C)$ commute with filtered colimits.  The class of isomorphisms in $\dg^{\cA(C)}$ is perfect since $\dg^{\cA(C)}$ is a locally presentable category, therefore it follows from Corollary~A.2.6.12 of \cite{L1} that the weak equivalences in $\dg^C$ are a perfect class. 

Now consider the pushout diagram
\[
\begin{tikzcd}
M   \arrow[r,"g"]\arrow[d,"f"] & N'  \arrow[d,"f'"]\\
N   \arrow[r,"g'"] &P
\end{tikzcd}
\]
in $\dg^C$ where $f$ is a cofibration and $g$ is a weak equivalence.  We need to show that $f':N\ra P$ is a weak equivalence.  Since $C$ is quasi-abelian, it follows that $g'$ is a  degreewise strict monomorphism.  It then follows from Proposition~2.12 of \cite{B1} that this diagram is part of a commutative diagram
\[
\begin{tikzcd}
M  \arrow[r,"g"]  \arrow[d,"f"]   & N'  \arrow[r,"h"] \arrow[d,"f'"]   & Q\arrow[d,"id"] \\
N    \arrow[r,"g'"]  & P  \arrow[r,"h'"] &Q
\end{tikzcd}
\]
where the horizontal rows are short exact with $h$ and $h'$ are strict epimorphisms.  The refined cohomology functor preserves short exact sequences and therefore, for each $n$, we have a diagram
\[
\begin{tikzcd}
0 \arrow[r]   & \cH^n(M)  \arrow[r,"g"]\arrow[d,"\theta_1"]  & \cH^n(N')  \arrow[r,"h"]\arrow[d,"\theta_2"] & \cH^n(Q) \arrow[r]\arrow[d,"\theta_3"] & 0 \\
0  \arrow[r]  &\cH^n(N)    \arrow[r,"{g'}"]       &\cH^n(P)  \arrow[r,"{h'}"]  &\cH^n(Q) \arrow[r] & 0
\end{tikzcd}
\]
where $\theta_1$ and $\theta_3$ are isomorphisms.  It follows from the short five lemma that $\theta_2$ is an isomorphism.

Finally, we need to show that if $f:M\ra N$ is a morphism in $\dg^C$ with the right lifting property with respect to every morphism in $\sC$, then $f\in\sW$.  We show that for any $n\in\bb{Z}$, the map $\alpha:\cH^n(M)\ra\cH^n(N)$ is an epimorphism (proving that $\alpha$ is a monomorphism can be done in an analogous way to that in Proposition~1.3.5.3 of \cite{L2}).  Let $K_M:=\ker(d^{n+1}_M)$, $K_N:=\ker(d_N^{n+1})$ and $u:0\ra K_N$ be the zero map.  Therefore $0\ra C[u,n]$ is a cofibration and the strict monomorphism $C[u,n]\ra N^{n+1}$ lifts to a map $K_N\ra K_M$.  Thus $\alpha$ is an epimorphism on refined cohomology.   
\end{proof}

We will call the model structure of Proposition~\ref{injective} the \textit{injective model structure}.  It is clear that every object in the injective model structure is cofibrant.  To characterize the fibrant objects, we recall some definitions.  

A functor between quasi-abelian categories is said to be \textit{exact} if it preserves short exact sequences.  An object $I$ in a quasi-abelian category $C$ is said to be \textit{injective} if the functor 
\[     \Hom_C(-,I):C^\circ\ra\Ab   \]
is exact.  The object $I$ is injective if and only if for every strict monomorphism $f:V\ra W$ in $C$, the induced map
\[   \Hom_C(W,I)\ra\Hom_C(V,I)  \]
is a surjection.  The category $C$ is then said to have \textit{enough injectives} if for every object $V$ in $C$, there exists a strict monomorphism $V\ra I$ with $I$ injective.

\begin{lem}
Let $C$ be a Grothendieck quasi-abelian category.  If $M$ is fibrant in the injective model structure on $\dg^C$, then each $M^n$ is an injective object in $C$.
\end{lem}

\begin{proof}
Let $u:V\ra W$ be a strict monomorphism in $C$ and consider the induced trivial cofibration $f:C[\id_V,n]\ra C[\id_W,n]$ in $\dg^C$ with respect to the injective model structure.  Now assume that $M$ is fibrant.  Then by definition, it has the right lifting property with respect to $f$ and thus $V\ra M^n$ factors through the strict monomorphism $u$.  It follows that $M^n$ is an injective object of $C$.  
\end{proof}

\begin{lem}
Let $C$ be a Grothendieck quasi-abelian category.  Then the category $C$ has enough injectives.  
\end{lem}

\begin{proof}
Let $u:V\ra 0$ be a morphism in $C$ and factor the map $C[u,0]\ra *$ by a trivial cofibration $C[u,0]\ra Q$ followed by a fibration in the injective model structure on $\dg^C$.  Therefore $Q$ is a fibrant object of $\dg^C$ and thus, by part one, $Q^0$ is an injective object of $C$.  It follows that for every object $V$ of $C$, there exists a strict monomorphism $V\ra Q^0$ for some injective object $Q^0$ and hence $C$ has enough injectives.
\end{proof}

We now give some examples of dg-modules in a quasi-abelian category which are simple extensions of familiar examples.

\begin{ex}\label{fine}
Let $\dg^{\Vect_k}$ denote the category of dg-vector spaces over $k$.  Then there exists a fully faithful functor
\[    \tu{Fine}_*:\dg^{\Vect_k}\ra\dg^{\CBorn_k}  \]
sending a dg-vector space $M$ to the dg-vector space $M$ endowed with the \textit{fine bornology}.  A subset $N$ of $M$ is bounded in $\tu{Fine}_*(M)$ if and only if there exists a finite dimensional subspace $(M_N)^n\subseteq M^n$ for each $n\in\bb{Z}$ such that $N^n$ is a bounded subset of $(M_N)^n\simeq k^n$.  It is the finest possible bornology on $M$.  The functor $\tu{Fine}$ is left adjoint to the forgetful functor and the adjunction is a Quillen adjunction of model categories where $\dg^{\Vect_k}$ is endowed with its injective model structure.
\end{ex}

\begin{ex}\label{borcpct}
Let $\dg^{\CTVS_k}$ denote the category of chain complexes of complete topological vector spaces over $k$.  Let 
\[  \tu{vN}_*:\dg^{\CTVS_k}\ra\dg^{\CBorn_k}  \]
denote the functor which sends a complete topological dg-vector space $M$ to its \textit{bornologification}, ie. $\tu{vN}_*(M)$ is endowed with the \textit{von Neumann bornology} in which for all $n\in\bb{Z}$, a subset $N^n\subset M^n$ is bounded if and only if it is absorbed by every neighbourhood of the origin in $M^n$.  This functor admits a fully faithful left adjoint $\gamma$ which associates to a bornological dg-vector space $V$, the complete topological dg-vector space $\gamma(V)$ such that for each $n\in\bb{Z}$, a basis of $\gamma(V)^n$ consists of those subsets of the neighborhood of the origin that absorb bounded sets.  This is the finest locally convex topology whose von Neumann bornology coincides with the original one.  The restriction of the bornologification functor to Fr\'echet dg-vector spaces 
\[  \tu{vN}_*:\dg^{\tu{Fre}_k}\ra\dg^{\CBorn_k}  \]
is fully faithful.  
\end{ex}

\begin{ex}
Another interesting bornology one can attach to an object in $\dg^{\CTVS_k}$ is the \textit{precompact bornology}.  Let
\[   \tu{pC}_*:\dg^{\CTVS_k}\ra\dg^{\CBorn_k}    \]
denote the functor which sends a complete topological dg-vector space $M$ to the dg-vector space $M$ endowed with the bornology where subset $N$ of $M$ is bounded if and only if for each $n\in\bb{Z}$, the closure of $N^n$ is compact.  Again, the restriction functor to Fr\'echet dg-vector spaces is fully faithful.
\end{ex}

One can construct cartesian closed categories of cochain complexes of topological vector spaces by exploiting their underlying bornological properties.  Likewise, although Proposition~\ref{injective} does not define an injective model structure on categories of topological vector spaces directly, we can transfer the injective model structure on categories of bornological vector spaces to reflective subcategories of topological vector spaces via the completion functors.

\begin{prop}
There exists a Quillen equivalence 
\[     \dg^{\Born_k}\ra\dg^{\BTVS_k}  \]
of model categories.  This Quillen equivalence holds between separated bornological vector spaces and bornological separated topological vectors spaces and between complete bornological vector spaces and convenient vector spaces.
\end{prop}

\begin{proof}
From the notation in the introduction, recall that there exists an adjunction $t^\infty\circ\gamma\dashv\tu{vN}\circ i$ inducing an equivalence of categories where $t^\infty$ is the completion functor.  This induces an equivalence of categories 
\[  t_*^\infty\circ\gamma_*:\dg^{\Born_k}\ra\dg^{\BTVS_k}  \]
of cochain complexes where $\gamma_*$ and $t_*^\infty$ are the induced dg-functors.  The category $\dg^{\Born_k}$ is endowed with an injective model structure by Proposition~\ref{bornisgqa} and Proposition~\ref{injective}.  

We now endow $\dg^{\BTVS_k}$ with the transfer model structure.  A functor inducing an equivalence of categories induces a Quillen equivalence via the transfer model structure.  The same argument holds for the other two cases.
\end{proof}

Let $C$ be a (symmetric) monoidal quasi-abelian category.  We let 
\[  \dg_R^C:=\tu{Ch}(\Mod_R(C))  \]
denote the category of chain complexes in the category of modules over a monoid object $R$ in $C$.  We call $\dg_R^C$ the category of \textit{dg-modules} over $R$.

There exists a \textit{bornological tensor product} satisfying the following universal property~:  there exists a bornological isomorphism
\[    \uHom(V\otimes W,X)\xras\uHom(V\times W,X)   \]
for $V,F,X\in\Born_k$.  The \textit{complete bornological tensor product} is the completion of the bornological tensor product.  The category $\CBorn_k$ is a symmetric monoidal category for the complete bornological tensor product.  A commutative monoid object in $\CBorn_k$ will be called a \textit{complete bornological algebra} over $k$.  

\begin{ex}\label{borninj}
Let $A$ be a complete bornological algebra over $k$.  Denote the category of complete bornological $A$-modules by $\Mod_A(\CBorn_k)$.  The category of cochain complexes in $\Mod_A(\CBorn_k)$ will be denoted $\dg^{\CBorn_k}_A$.  The objects in this category will be called \textit{complete bornological dg-modules} over $A$.  The category $\CBorn_k$ of complete bornological vector spaces over $k$ is a Grothendieck quasi-abelian category by Proposition~\ref{bornisgqa} .  Therefore, by Proposition~\ref{modisgqa} the category $\Mod_A(\CBorn_k)$ is Grothendieck quasi-abelian.  It now follows from Proposition~\ref{injective} that the category $\dg_A^{\CBorn_k}$ of complete bornological dg-modules admits an injective model structure.  
\end{ex}

\begin{ex}
Let $A$ be a Banach algebra over $k$.  Denote the category of Banach $A$-modules by $\Mod_A(\Ban_k)$.  A chain complex in $\Mod_A(\Ban_k)$ will be a called a \textit{Banach dg-module} over $A$ and the category of Banach dg-modules over $A$ will be denoted $\dg_A^{\Ban_k}$.  The category of inductive systems in $\dg_A^{\Ban_k}$ will be denoted $\Ind(\dg_A^{\Ban_k})$.  The category $\Ban_k$ is a Grothendieck quasi-abelian category.  By Proposition~\ref{modisgqa}, the category $\Mod_A(\Ban_k)$ is Grothendieck quasi-abelian.  It then follows from Proposition~\ref{ind} that $\Ind(\Mod_A(\Ban_k))$ is also Grothendieck quasi-abelian.  Since there exists a canonical equivalence 
\[  \tu{Ch}(\Ind(\Mod_A(\Ban_k)))\ra\Ind(\dg_A^{\Ban_k})   \] 
of categories, it follows from Proposition~\ref{injective} that $\Ind(\dg_A^{\Ban_k})$ admits an injective model structure.
\end{ex}

According to Example~\ref{borninj} the category of complete bornological dg-vector spaces is endowed with an injective model structure.  However, to define a model category of algebras in these categories, it is important to introduce another model structure with the same weak equivalences which interacts well with the tensor product.  In order to do so, we introduce an extra assumption.  

An object $P$ in a quasi-abelian category $C$ is said to be \textit{projective} if the functor 
\[     \Hom_C(P,-):C^\circ\ra\Ab   \]
is exact.  The object $P$ is projective if and only if for every strict epimorphism $f:V\ra W$ in $C$, the induced map
\[   \Hom_C(P,V)\ra\Hom_C(P,W)  \]
is a surjection.  The category $C$ is then said to have \textit{enough projectives} if for every object $V$ in $C$, there exists a strict epimorphism $P\ra V$ with $P$ projective.

\begin{prop}\label{projective}
Let $C$ be a monoidal Grothendieck quasi-abelian category with enough projective objects and $R$ a monoid object in $C$.  There exists a left proper combinatorial model structure on the category $\dg_R^C$ of dg-modules over $R$ with the following classes of morphisms~:
\begin{itemize}
\item[$(\sF)$] The fibrations are the degreewise strict epimorphisms.
\item[$(\sW)$] The weak equivalences are the refined quasi-isomorphisms.
\item[$(\sC)$] The cofibrations are the those maps with the left lifting property with respect to trivial fibrations.
\end{itemize}
\end{prop}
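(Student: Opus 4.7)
The plan is to apply the recognition theorem for combinatorial model structures (Proposition~A.2.6.13 of \cite{L1}) dually to Proposition~\ref{injective}. Since $\Mod(R)$ is Grothendieck quasi-abelian by Lemma~\ref{modisgqa}, it is locally presentable, so I would choose a small set of generators $\{x_i\}$ of $C$ under small colimits and set $g_i := R\otimes x_i$, a small set of generators for $\Mod(R)$. For each $n\in\bb{Z}$ and each $g_i$, let $S^n(g_i)$ denote the complex with $g_i$ in degree $n$, and $D^n(g_i)$ the acyclic complex with $g_i$ in degrees $n-1$ and $n$ joined by the identity. I would propose
\[ J := \{0\to D^n(g_i)\}_{i,\,n\in\bb{Z}}, \qquad I := \{S^{n-1}(g_i)\to D^n(g_i)\}_{i,\,n\in\bb{Z}} \]
as the generating trivial cofibrations and generating cofibrations respectively.

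Next I would identify the right-lifting classes. A morphism $f:M\to N$ has the right lifting property with respect to $J$ if and only if, for every $i$ and every $n$, the induced map $\Hom_C(x_i,M_n)\to\Hom_C(x_i,N_n)$ is surjective, and this characterises $f$ as a degreewise strict epimorphism via the free-forgetful adjunction between $C$ and $\Mod(R)$ and the detection of strict epimorphisms by the generators. Similarly, $f$ has the right lifting property with respect to $I$ if and only if it is additionally a reduced quasi-isomorphism. The cofibrations then arise by construction as the retracts of transfinite compositions of pushouts of maps in $I$, matching the class $\sC$ in the statement.

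The remaining hypotheses follow the template of Proposition~\ref{injective}. The class $\sW$ of reduced quasi-isomorphisms is perfect by Corollary~A.2.6.12 of \cite{L1}, since the reduced cohomology functor commutes with filtered colimits in a Grothendieck quasi-abelian category; the two-out-of-three property is immediate; and pushouts of maps in $J$ are weak equivalences because $D^n(g_i)$ is acyclic and strict monomorphisms are stable under pushout. The accessibility of $J$ and $I$ follows from the compactness of the generators $x_i$.

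The main obstacle I anticipate is the identification of $J$-injectives with degreewise strict epimorphisms. In the abelian case this is standard because projective generators detect epimorphisms, but in the quasi-abelian setting one must verify that surjectivity of $\Hom_C(x_i,M_n)\to\Hom_C(x_i,N_n)$ for all generators implies $f_n$ is a \emph{strict} epimorphism, not merely an epimorphism. I would handle this by reducing to the canonical factorisation of $f_n$ through its coimage and image and invoking stability of strict morphisms under the colimits present in the Grothendieck quasi-abelian category $\Mod(R)$, analogous to the closure argument used in the proof of Proposition~\ref{injective}. Once this detection issue is settled, the remainder of the proof proceeds by adaptation of the projective model structure constructions for dg-modules in \cite{L2} and \cite{L3}, using reduced cohomology in place of the usual cohomology and strict epimorphisms in place of epimorphisms throughout.
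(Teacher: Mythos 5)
Your overall strategy is the paper's: the proof there likewise adapts Lurie's projective model structure on dg-modules (Proposition~7.1.2.8 of \cite{L2}), replacing epimorphisms by strict epimorphisms and cohomology by reduced cohomology, and pins down the cofibrations as the weakly saturated class generated by sphere-to-disk inclusions. One difference of detail: the paper's generating set is $\sC_0=\{R[n-1]\ra M(\id_R,n)\}_{n\in\bb{Z}}$, built from the free module on the unit alone, whereas you index over the family $g_i=R\otimes x_i$ for presentability generators $x_i$ of $C$. (A minor further point: such $x_i$ are merely $\kappa$-presentable, not compact; smallness for the small object argument is automatic in a locally presentable category, so nothing hinges on ``compactness.'')

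The genuine gap is exactly the step you flagged, and your proposed repair does not close it. The right lifting property against $0\ra D^n(g_i)$ is equivalent to surjectivity of $\Hom_C(x_i,M_n)\ra\Hom_C(x_i,N_n)$. For a colimit-dense generating family this does imply that $f_n$ is an epimorphism (two maps out of $N_n$ that agree after composing with $f_n$ agree on the legs of the canonical colimit cone, by lifting each leg), but it gives no control whatsoever on strictness: presentability generators carry no projectivity with respect to strict epimorphisms, and the individual lifts of the cone legs are not compatible with the colimit diagram presenting $N_n$, so they cannot be assembled into anything seen by the coimage/image factorization of $f_n$. The closure of strict \emph{monomorphisms} under filtered colimits, which you invoke by analogy with Proposition~\ref{injective}, lives on the other side of the factorization and does not intervene. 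What the detection step actually requires is a family of generators that is projective for strict epimorphisms. In $\CBorn_k$, for example, strict epimorphisms are detected by lifting maps from the Banach spaces $\ell^1(S)$: a bounded disk $B\subset N_n$ is exhibited as the image of a bounded set by lifting the canonical bounded map $\ell^1(B)\ra N_n$; surjectivity of $\Hom$ from an arbitrary small generating set does not achieve this, since a bounded bijection can induce surjections on all such $\Hom$'s without being a bornological quotient. Such strictly projective generators are an additional hypothesis, not a consequence of the Grothendieck quasi-abelian axioms, and this is precisely where ``replace epimorphism by strict epimorphism'' in Lurie's argument needs genuine input: in \cite{L2} the identification of the fibrations rests on $R$ being a projective generator of the abelian category of modules over a ring, so that $\Hom(R,-)$-surjectivity \emph{is} surjectivity. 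The paper's one-line proof elides this point as well, so your proposal is faithful to the paper's strategy; but as a proof, the identification of the $J$-injectives with the degreewise strict epimorphisms remains open in your write-up, and the closure argument you sketch does not supply it.
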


\begin{proof}
One can use Jeff Smith's Theorem~1.7 in \cite{Be} or follow the same argument as in the abelian case, for example as in Proposition~7.1.2.8 of \cite{L2}, by checking the conditions in Proposition~A.2.6.13 of \cite{L1} replacing monomorphisms and epimorphisms using their strict notions and cohomology by refined cohomology.  We describe the latter.  

We define the cofibrations in this model structure to be the smallest weakly saturated class of morphisms containing generating cofibrations $\sC_0$ given by the collection of strict monomorphisms $\{C[u,n]\ra C[\id_R,n]\}_{n\in\bb{Z}}$ where $u:0\ra R$ (using Notation~\ref{notation}).  We know from Proposition~\ref{injective} that the class of refined quasi-isomorphisms is a perfect class and that the pushout of a cofibration along a refined quasi-isomorphism is a refined quasi-isomorphism.  To show that any morphism in $\dg_R^C$ which has the right lifting property with respect to every morphism in $\sC_0$ is a weak equivalence, one can use the argument in the abelian case in \cite{L2} using the refined cohomology.

It remains to show that the class of fibrations are indeed those stated in the proposition.  First assume that $f:M\ra N$ is a fibration and consider the diagram  
\[
\begin{tikzcd}
0  \arrow[r]\arrow[d,"g"]  & M\arrow[d,"f"]\\
C[\id_R,n] \arrow[r]  & N
\end{tikzcd}
\]
in $\dg^C_R$.  The inclusion $g$ is a trivial cofibration~: it is a refined quasi-isomorphism and can be realized as the composition of the morphism $g':0\ra C[u,n]$, which is an element in $\sC$ since there is a natural pushout diagram
\[
\begin{tikzcd}
C[u,n+1]  \arrow[r,"{i}"]\arrow[d]  & C[\id_R,n+1]\arrow[d]\\
0 \arrow[r,"{g'}"]  & C[u,n]
\end{tikzcd}
\]
in $\dg_R^C$ where $i\in\sC_0$, and the morphism $g'':C[u,n]\ra C[\id_R,n]$ in $\sC_0$.  Therefore, $g=g''\circ g'$ is a trivial fibration and since $f$ is a fibration by assumption, our diagram of interest has the right lifting property with respect to $g$.  Therefore $f$ is a degreewise epimorphism which is moreover strict since $R$ is a projective object.

Now assume that $f:M\ra N$ is a degreewise strict epimorphism.  For any $X,Y\in\dg^C_R$, consider the chain complex 
$    \Map(X,Y):=\{\Map(X,Y)^n,d^n\}_{n\in\bb{Z}} $
of abelian groups where 
\[  \Map(X,Y)^n=\prod_{p\in\bb{Z}}\Hom_C(X^p,Y^{p+n})  \] 
and 
\[  (d^nf)(x)=d_Y^n(f(x))-(-1)^nf(d^n_Xx)  \] 
for $f\in\Map(X,Y)^n$.  To show that $f$ is a fibration, it is enough to show that the map
\[    \phi:\Map(B,M)\ra\Map(B,N)\times_{\Map(A,N)}\Map(A,M)   \]
is surjective on $0$-cocycles for any trivial cofibration $g:A\ra B$.     

Since $g$ is a trivial cofibration, for all $n\in\bb{Z}$, the composition $A^n\ra B^n\ra Z^n$ is short exact where $Z^n=\tu{Coker}(g^n:A^n\ra B^n)$ for an object $Z$ in $\dg^C_R$.  It follows from Proposition~2.5 of \cite{CH} that $g$ is split and $Z$ is degreewise projective.  Therefore, the induced diagram
\[
\begin{tikzcd}
0 \arrow[r]  &\Map(Z,M) \arrow[r]\arrow[d,"\theta"] &\Map(B,M) \arrow[r]\arrow[d] &\Map(A,M) \arrow[r]\arrow[d] & 0\\
0 \arrow[r] &\Map(Z,N) \arrow[r] &\Map(B,N) \arrow[r] &\Map(A,N) \arrow[r] & 0
\end{tikzcd}
\]
is a diagram of exact sequences of cochain complexes of abelian groups.  The map $\theta$ is an epimorphism since $Z$ is degreewise projective and $f$ is a degreewise strict epimorphism.  A diagram chase, following the steps in Proposition~7.1.2.8 of \cite{L2}, then gives the result.
\end{proof}

The model structure of Proposition~\ref{projective} will be called the \textit{projective model structure}.

\begin{ex}\label{poind}
The category $\SNorm_k$ has enough projective objects by Proposition~3.2.11 of \cite{Sc}.  Let  $B_V=\{v\in V : \|v\|\leq 1\}$ be the unit ball for any vector space $V$.  For any $V$ in $\SNorm_k$, let $\bigoplus_{b\in B_V}k$ be endowed with the semi-norm $p$ defined by $p((c_b)_{b\in B_V})=\sum_{b\in B_V}p_b(c_b)$.  Then $\bigoplus_{b\in B_V}k$ is a projective object in $\SNorm_k$ and the morphism $\bigoplus_{b\in B_V}k\ra V$ sending $(c_b)_{b\in B_V}$ to $\sum_{b\in B_V}c_b b$ is a strict epimorphism.  A similar result holds for $\Norm_k$.

The category $\Ban_k$ has enough projective objects by Proposition~3.2.2 of \cite{P3}.  For any $V$ in $\Ban_k$, let $l_1(B_V,k)=\{ (c_b)_{b\in B_V} : c_b\in k,\sum_{b\in B_V}\|c_b\|_k<\infty\}$.  Then $l_1(B_V,k)$ is projective in $\Ban_k$ and the morphism $l_1(B_V,k)\ra V$ sending $(c_b)_{b\in B_V}$ to $\sum_{b\in B_V}c_b b$ is a strict epimorphism.

Using these results, it then follows from Corollary~1.4.14 of \cite{Sc} that the categories $\Ind(\SNorm_k)$, $\Ind(\Norm_k)$ and $\Ind(\Ban_k)$ have enough projectives.
\end{ex}
 
\begin{ex}
The category $\Born_k$ has enough projective objects by Proposition~2.13 of \cite{PS}.  The category $\SBorn_k$ has enough projective objects by Proposition~4.11 of \cite{PS}.  The category $\CBorn_k$ has enough projective objects by Proposition~5.8 of \cite{PS}.  These results can be deduced using the results of Example~\ref{poind}.
\end{ex}


Let $C$ be a closed symmetric monoidal Grothendieck quasi-abelian category.  Then the category $\dg^C$ of cochain complexes in $C$ is endowed with a symmetric monoidal structure 
\[    \otimes:\dg^C\times\dg^C\ra\dg^C   \]
given on objects $(M,d_M)$ and $(N,d_N)$ by the formula
\[     (M\otimes N)^n=\oplus_{p+q=n} M^p\otimes N^q   \]
and $d_{M\otimes N}(x\otimes y)=d_Mx\otimes y+(-1)^px\otimes d_Ny$ where $p$ is the degree of $x$.  The unit of $\dg^C$ is the unit of $C$ (concentrated in degree zero) and the symmetric stucture $M\otimes N\ra N\otimes M$ is that induced from $C$ with the sign convention $x\otimes y\mapsto (-1)^{pq}y\otimes x$.

The category $C$ is said to be a \textit{symmetric monoidal model category} if $C$ is endowed with a model structure such that the unit object with respect to the symmetric monoidal structure is cofibrant and the tensor product functor is a left Quillen bifunctor.

\begin{prop}\label{smmc}
Let $C$ be a closed symmetric monoidal Grothendeick quasi-abelian category and $R$ a commutative monoid object in $C$.  Then the category $\dg_R^C$ of dg-modules over $R$ is a symmetric monoidal model category with respect to the projective model structure.
\end{prop}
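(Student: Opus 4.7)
The plan is to verify the two conditions defining a symmetric monoidal model category for $\dg_R(C)$: the pushout-product axiom for $-\otimes_R-$ and the unit axiom. I would follow the argument of Proposition~7.1.2.11 of \cite{L2}, modified to the quasi-abelian setting by replacing ordinary cohomology, monomorphisms and epimorphisms with their reduced and strict counterparts, exactly as in the proof of Proposition~\ref{projective}.

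For the pushout-product axiom, I would reduce to checking the claim on the generating cofibrations $\sC_0 = \{R[n-1]\to M(\id_R,n)\}_{n\in\bb{Z}}$ and on a set of generating trivial cofibrations (taken to be the inclusions $\{0\to M(\id_R,n)\}_{n\in\bb{Z}}$ of the zero object into the contractible disk complexes). Since $-\otimes_R-$ preserves colimits in each variable and both cofibrations and trivial cofibrations form weakly saturated classes, a standard argument reduces the pushout-product axiom to two statements: first, that the pushout-product of two generating cofibrations is a cofibration; second, that the pushout-product of a generating cofibration with a generating trivial cofibration is a trivial cofibration. Both are verified by direct calculation. Since tensoring commutes with shifts and $M(\id_R,n)$ is contractible, the pushout-product of two generating cofibrations is identifiable with a strict monomorphism whose cokernel is a shifted contractible disk complex; and the pushout-product of a generating cofibration with a generating trivial cofibration factors as a strict monomorphism into a contractible complex, hence is itself a trivial cofibration.

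For the unit axiom, the unit $R$ concentrated in degree zero is itself cofibrant in the projective model structure, since $R$ may be realised as a retract of a cellular object built from the generating cofibrations (exploiting the contractibility of $M(\id_R,n)$). Consequently the identity $R\to R$ serves as a cofibrant replacement of the unit and the unit axiom is automatic.

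The main obstacle I anticipate is ensuring that the pushout-products of generating cofibrations yield \emph{strict} monomorphisms in $\dg_R(C)$, rather than arbitrary monomorphisms. Strict monomorphisms are closed under pushouts by the very definition of a quasi-abelian category, and closure under transfinite composition follows from the Grothendieck hypothesis, but the compatibility of the tensor product with strictness is an additional input that must be extracted from the monoidal structure on $C$ (a condition satisfied in all examples of interest). A secondary concern is the K\"unneth-type calculation required to track reduced cohomology through the pushout-product, since in the quasi-abelian setting kernels and cokernels of strict morphisms must replace the usual ones throughout.
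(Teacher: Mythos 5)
Your proposal is correct and follows essentially the same route as the paper, which likewise verifies that $-\otimes_R-$ is a left Quillen bifunctor and that the unit is cofibrant by adapting Proposition~7.1.2.11 of \cite{L2} with strict monomorphisms and reduced cohomology in place of the abelian notions. The strictness concern you flag at the end dissolves after the reduction to the generating cofibrations $R[n-1]\to M(\id_R,n)$: since these are built from free modules on the unit, their pushout-products are degreewise split monomorphisms and hence automatically strict, so the only extra input needed is the pushout-stability of strict monomorphisms, which holds by the very definition of a quasi-abelian category --- exactly the point the paper's proof invokes.
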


\begin{proof}
We need to show that the tensor product functor is a left Quillen bifunctor and that the unit object $R$ is cofibrant.  The second part is clear.  For the first part, since $\dg_R^C$ is combinatorial, if follows from Lemma~3.5 of \cite{SS} that it suffices to check the pushout product axiom on a pair of generating cofibrations and a pair of generating trivial cofibrations.

Let $f:C[u,n]\ra C[\id_R,n]$ and $g:C[u,m]\ra C[\id_R,m]$ be generating cofibrations where $u:0\ra R$.  Since the pushout product $f\wedge g$ of $f$ and $g$ is a pushout of the generating cofibration $h:C[u,m+n]\ra C[\id_R,n+m]$, the morphism $f\wedge g$ is a cofibration.  Now assume that $f$ and $g$ are trivial cofibrations.  Therefore, $h$ is a trivial cofibration and since the projective model structure is left proper, the morphism $f\wedge g$ is a weak equivalence.
\end{proof}

The tensor products we are interested in are the following.  Since $\Born_k$ is closed, the bornological tensor product satisfies the property that for bornological dg-vector spaces $E$ and $F$ there exists a bornological isomorphism
\[    \uHom(E\otimes F,G)\ra\uHom(E\times F,G)  \]
for any bornological dg-vector space $G$.  Equivalently, the functor 
\[  E\otimes-:\dg^{\Born_k}\ra\dg^{\Born_k}   \] 
is left adjoint to the functor $\uHom(E,-)$ between the category of bornological dg-vector spaces.  The fully faithful inclusion $\dg^{\CBorn_k}\ra\dg^{\Born_k}$ admits a left adjoint 
\[    {-}^c:\dg^{\Born_k}\ra\dg^{\CBorn_k}   \]
called the completion functor.

The \textit{complete tensor product} is the completion of the bornological tensor product.  The category of complete bornological dg-vector spaces is then closed under the complete tensor product.  It follows from Proposition~\ref{projective} that the category $\dg_A^{\Born_k}$ of bornological dg-modules over a bornological algebra and the category $\dg_A^{\CBorn_k}$ of complete bornological dg-modules over a complete bornological algebra are endowed with projective model structures.  Moreover, they are both closed symmetric monoidal model categories.  

\begin{lem}\label{cofibrant}
Every object in $\dg_A^{\Born_k}$ and $\dg_A^{\CBorn_k}$ is cofibrant with respect to the projective model structure.
\end{lem}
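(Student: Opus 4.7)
The plan is to show that $0\to M$ has the left lifting property against every trivial fibration in $\dg_A^\born$ (respectively $\dg_A^\cborn$), which is the defining condition for $M$ to be cofibrant in the projective model structure of Proposition~\ref{projective}.

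I would first reduce the problem to splitting of trivial fibrations. The lifting property for $0\to M$ against every trivial fibration $p:X\to Y$, quantified over all objects $M$, is equivalent to the assertion that every trivial fibration admits a section: one direction follows by taking $M=Y$ and $f=\id_Y$, the other by composing any $f:M\to Y$ with a chosen section $s:Y\to X$. Thus it suffices to produce a section for every trivial fibration.

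Given a trivial fibration $p:X\to Y$, I would then analyse its kernel $K:=\ker(p)$. Since $p$ is a degreewise strict epimorphism, the conflation $0\to K\to X\to Y\to 0$ gives rise to a long exact sequence in reduced cohomology (which behaves well because $C$ is Grothendieck quasi-abelian); together with the hypothesis that $p$ is a reduced quasi-isomorphism this forces $H^n(K)=0$ for all $n$, so $K$ is acyclic. The main step is to prove that every acyclic complex $K$ in $\dg_A^\born$ (respectively $\dg_A^\cborn$) is contractible, i.e.\ admits a degree $-1$ morphism $h:K\to K$ satisfying $dh+hd=\id_K$. Once such an $h$ is in hand, a chain section of $p$ can be produced by the standard perturbation argument: one starts from any bounded $A$-linear set-theoretic right-inverse of $p$ in each degree and corrects it using $h$ into a map compatible with the differentials. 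The homotopy $h$ itself is constructed inductively on the degree, by producing at each stage a bounded $A$-linear section of the strict epimorphism from cycles onto boundaries coming from the acyclicity of $K$.

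The main obstacle is this construction of the contracting homotopy. Unlike the classical abelian setting, where an acyclic complex of vector spaces is automatically contractible because every short exact sequence of vector spaces splits, the category $\Mod_A^\born$ is merely quasi-abelian and its conflations need not split. I would address this by exploiting the bornological structure, in particular the dissection functor into ind-objects of semi-normed (respectively Banach) spaces and the fact that the ambient field is $k$, to produce bounded $A$-linear sections compatibly across the directed system of bounded disks, and then to assemble them into a single homotopy. In the complete bornological case a further compatibility with the completion functor must be tracked to ensure that the resulting contracting homotopy remains a morphism in $\dg_A^\cborn$.
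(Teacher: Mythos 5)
Your reduction is correct as far as it goes: cofibrancy of every object is indeed equivalent to every trivial fibration admitting a chain-level section, and this is a genuinely different starting point from the paper, whose proof instead argues that the injective and projective model structures coincide (by exhibiting the injective generating cofibrations as pushouts of coproducts of the projective ones), so that every map $0\ra M$ is a degreewise strict monomorphism and hence a cofibration. But your proposal then stalls at exactly the point you flag as the main obstacle, and the obstacle is fatal rather than technical: in $\Born_k$ and $\CBorn_k$ it is false that every acyclic complex is contractible, and equally false that a trivial fibration admits bounded linear sections degreewise (which your perturbation step also presupposes). Concretely, take $K=c_0$ inside $V=\ell^\infty$, both with their von Neumann bornologies. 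The quotient map $p:\ell^\infty\ra\ell^\infty/c_0$ is a strict epimorphism of complete bornological spaces (norm-bounded sets lift to norm-bounded sets by openness of the Banach quotient map), but a bounded linear section of $p$ is the same thing as a continuous linear section, which would complement $c_0$ in $\ell^\infty$, contradicting Phillips' theorem. Hence the acyclic complex $0\ra c_0\ra\ell^\infty\ra\ell^\infty/c_0\ra 0$ is not contractible: a contracting homotopy $h$ would satisfy $p\circ h=\id$ in top degree and yield precisely such a section. No amount of care with the dissection functor or with compatibility over bounded disks can repair this, because the obstruction is a genuine functional-analytic one (a non-complemented closed subspace), not a bookkeeping problem. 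A secondary gap: the long exact sequence in reduced cohomology that you use to show $\ker(p)$ is acyclic is not automatic in a quasi-abelian category, since reduced cohomology is $C$-valued and the natural long exact sequence lives in the left heart; this step would also need an argument.

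Note, moreover, that your (correct) equivalence cuts both ways. The two-term complex $X=(c_0\hookrightarrow\ell^\infty)$ placed in degrees $-1$ and $0$ maps onto $Y=\ell^\infty/c_0$ concentrated in degree $0$ by a degreewise strict epimorphism which is a reduced quasi-isomorphism, and by the above it admits no chain section; so the splitting statement your route reduces the lemma to actually fails in $\dg_k^\cborn$ (and in $\dg_k^\born$, since Banach spaces with their von Neumann bornologies sit inside $\Born_k$ with the same bounded maps). Worked through honestly, your approach therefore does not merely have a gap — it produces a counterexample to the splitting the lemma requires, which means the disagreement with the paper's proof should be scrutinized rather than patched: the paper's claim that the injective generating cofibrations lie in the weakly saturated class generated by the projective ones is exactly where the same splitting question is hidden, and it is not addressed there.
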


\begin{proof}
The category of bornological dg-modules is cocomplete.  Also, the class of generating cofibrations in the injective model structure are pushouts of coproducts of generating cofibrations in the projective model structure.  Any model category with the same generating cofibrations and weak equivalences define the same model structure.  Therefore the injective and projective model structure coincide for the category of bornological dg-modules.  It follows that every object is cofibrant.  The same argument applies to complete bornological dg-modules.
\end{proof}

We now consider some examples of symmetric monoidal functors between model categories in the quasi-abelian setting.

\begin{ex}
Let $\dg^{\Born_k}$ be endowed with its bornological tensor product and projective model structure.  When the category of dg-vector spaces is endowed with its canonical symmetric monoidal model structure, then the functor $\tu{Fine}$ of Example~\ref{fine} is symmetric monoidal functor between symmetric monoidal model categories.  
\end{ex}

\begin{ex}
Let $\dg^{\TVS_k}$ be endowed with its complete projective tensor product.  Then the bornologification functor $\tu{vN}$ and $\tu{pC}$ of Example~\ref{borcpct} are \textit{not} symmetric monoidal.  However, the restriction of $\tu{vN}$ to the subcategory of Banach dg-vector spaces and the restriction of $\tu{pC}$ to Fr\'echet dg-vector spaces are symmetric monoidal functors between model categories.
\end{ex}

\begin{ex}
By construction, the completion functors $t^\infty$, $s^\infty$ and $c^\infty$ are symmetric monoidal functors between symmetric monoidal model categories.  
\end{ex}

\begin{ex}\label{dgdiss}
There exists a differential graded \textit{dissection functor}
\[     \tu{diss}:\dg^{\Born_k}\ra\dg^{\Ind(\Ban_k)}     \]
which sends a complete bornological dg-vector space to an inductive system of dg-Banach spaces as in Section~\ref{qac}.  The dissection functor is fully faithful and its essential image consists of \textit{reduced} inductive systems, ie. those diagrams for which each map in the inductive system is a monomorphism \cite{M2}.  The left adjoint to the dissection functor sends an inductive system $(M_B)_{B\in\sD(M)}$ to $\colim_{B\in\sD(M)}M_B$.

Let $\Ind(\dg^{\Ban_k})$ be endowed with the canonical extension of the complete projective tensor product on $\dg^{\Ban_k}$ to $\Ind(\dg^{\Ban_k})$.  This makes $\Ind(\dg^{\Ban_k})$ a symmetric monoidal category and we endow it with a symmetric monoidal projective model structure.  Then the differential graded dissection functor $\tu{diss}$ is \textit{not} symmetric monoidal.  However, the composition 
\[  \tu{diss}\circ\tu{pC}:\dg^{\tu{Fre}_k}\ra\dg^{\Ind(\Ban_k)}   \] 
of the functor $\tu{pC}$ restricted to Fr\'echet dg-vector spaces with the differential graded dissection functor is a symmetric monoidal functor between symmetric monoidal model categories.
\end{ex} 

We conclude this section by showing that, although the functor in Example~\ref{dgdiss} is not an equivalence of categories, it is part of a Quillen equivalence of model categories with respect to the injective model structure.

\begin{prop}\label{snormborn}
There exists a Quillen equivalence
\[    \colim:\dg^{\Ind(\SNorm_k)}\rightleftarrows\dg^{\Born_k}:\tu{diss}  \]
of model categories endowed with the injective model structure.  This Quillen equivalence holds between ind-objects of normed spaces and separated bornological vector spaces and between ind-objects in Banach spaces and complete bornological vector spaces.
\end{prop}

\begin{proof}
We endow $\Ind(\dg^{\Ban_k})$ and $\dg^{\Born_k}$ with the injective model structure.  The left adjoint $\colim$ clearly preserves strict monomorphisms and refined quasi-isomorphisms.  Therefore the adjunction is a Quillen adjunction.  It remains to show that the left derived functor
\[      \bb{L}\colim:\h(\dg^{\Ind(\SNorm_k)})\ra\h(\dg^{\Born_k})       \]
is an equivalence of categories.  This follows from Proposition~3.10 of \cite{PS}.  The same is true utilizing Proposition~4.16 and Proposition~5.16 of \textit{loc.cit.} for the other two cases.
\end{proof}

\section{Differential graded algebras}

In this section we will set up the homotopy theory of differential graded algebras in a Grothendieck quasi-abelian category.  

Let $R$ be a commutative monoid object in $C$ and let $\tu{dga}_R^C$ denote the category of monoid objects in $\dg_R^C$.  The objects in $\tu{dga}_R^C$ will be called \textit{differential graded algebras} over $R$.  As in Section~\ref{dgm}, we will use the Grothendieck quasi-abelian category of complete bornological vector spaces as our primary example.  

\begin{ex}
Let $\tu{dga}^{\Born_k}_A$ denote the category of monoid objects in $\dg_A^{\CBorn_k}$ endowed with its complete bornological tensor product.  We call objects in this category \textit{complete bornological dg-algebras} over $A$.  Let $\cdga^{\CBorn_k}_A$ denote the category of commutative monoid objects in $\dg_A^{\CBorn_k}$ endowed with its complete bornological tensor product.  We call objects in this category \textit{commutative complete bornological dg-algebras} over $A$.  
\end{ex}

It follows from Proposition~\ref{smmc}, Lemma~\ref{cofibrant} and Theorem~4.1 of \cite{SS} that the category of (commutative) complete bornological dg-algebras is endowed with a combinatorial model structure induced from Proposition~\ref{projective}, ie. a fibration is a map if it is a fibration of complete bornological dg-modules and a weak equivalence if it is a weak equivalence of complete bornological dg-modules.  In the spirit of Section~\ref{dgm}, we prove a general theorem which encompasses this model structure, together with analogous structures on the examples in Proposition~\ref{snormgqa} and Proposition~\ref{bornisgqa}, by proving that a model structure exists on the category of algebras in an arbitrary Grothendieck quasi-abelian category.  

Let $C$ be a combinatorial symmetric monoidal model category and $D$ the collection of all morphisms in $C$ of the form
\[      x\otimes y\xra{\id_x\otimes g}x\otimes y'  \]
where $g$ is a trivial cofibration.  Let $\overline{D}$ denote the weakly saturated class of morphisms generated by $D$.  Recall that $C$ is said to satisfy the \textit{monoid axiom} if every morphism in $\overline{D}$ is a weak equivalence.

\begin{prop}\label{monoid}
Let $C$ be a Grothendieck quasi-abelian category and $R$ a commutative monoid object in $C$.  Let $\dg_R^C$ be endowed with the symmetric monoidal projective model structure.  Then $\dg_R^C$ satisfies the monoid axiom.
\end{prop}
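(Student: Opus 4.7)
The plan is to adapt the standard proof of the monoid axiom for chain complexes over a Grothendieck abelian category (see e.g.\ Schwede--Shipley, or Lurie~\cite{L2}~\S 7.1.2) to the quasi-abelian context, via the usual recipe of replacing monomorphisms by strict monomorphisms and cohomology by reduced cohomology.  First I would reduce to generators: since $\dg_R(C)$ is combinatorial, the projective model structure admits a set $J$ of generating trivial cofibrations, which one can take to be the inclusions $0\to M(\id_R[k],n)$, where the disk complex $M(\id_R[k],n)$ is the mapping cone on $\id_{R[k]}$.  Consequently $\overline{D}$ coincides with the weakly saturated closure of the set
\[
 D_0 := \{\id_x\otimes j \;:\; x\in\dg_R(C),\; j\in J\}.
\]

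Next I would show that $D_0$ lies in the class $\mathcal{A}$ of morphisms in $\dg_R(C)$ which are degreewise strict monomorphisms and whose cokernel is reduced-acyclic.  This is immediate for $\id_x\otimes j$: its source is zero, and its cokernel $x\otimes M(\id_R[k],n)$ splits as a graded object as $x[k-n]\oplus x[k-n-1]$ with identity differential, hence is the mapping cone on $\id_{x[k-n]}$ and is reduced-acyclic by a direct degreewise computation.  The long exact sequence of reduced cohomology attached to a strict short exact sequence of complexes (which is a conflation in the canonical exact structure on the quasi-abelian category $C$) then yields $\mathcal{A}\subseteq\sW$.

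To conclude, I would verify that $\mathcal{A}$ is weakly saturated.  Pushout closure follows from the axiom that strict monomorphisms are preserved by pushouts in a quasi-abelian category, combined with the canonical isomorphism between the cokernel of a map and the cokernel of any pushout of it; reduced-acyclicity of the cokernel is thereby preserved.  Closure under transfinite compositions uses the Grothendieck hypothesis on strict monomorphisms together with the fact that reduced cohomology commutes with filtered colimits (as already used in the proof of Proposition~\ref{injective}).  Retract closure is formal.  Hence $\overline{D}=\overline{D_0}\subseteq\mathcal{A}\subseteq\sW$, which is the monoid axiom.

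The main obstacle I anticipate is the pushout closure step for $\mathcal{A}$: one must argue that pushing out a strict monomorphism of complexes along any morphism yields another strict monomorphism \emph{with isomorphic cokernel}, so that reduced-acyclicity of the cokernel is preserved.  In the abelian case this is automatic, but in the quasi-abelian setting it requires a careful degreewise application of the quasi-abelian axioms.  A secondary technical point is ensuring that the long exact sequence of reduced cohomology exists and is functorial for conflations in $\tu{Ch}(C)$, so that the conclusion $\mathcal{A}\subseteq\sW$ is genuinely available.
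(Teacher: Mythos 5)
Your argument is correct and arrives at the monoid axiom by the same underlying mechanism as the paper (a degreewise strict monomorphism with reduced-acyclic cokernel is a weak equivalence), but the two proofs distribute the work genuinely differently. The paper never reduces to generating trivial cofibrations: it takes an \emph{arbitrary} trivial cofibration $g:y\ra y'$, tensors the associated strict short exact sequence $0\ra y\ra y'\ra y''\ra 0$ with $x$ (strict exactness is preserved via the hom-tensor argument), so that $\id_x\otimes g$ is a strict monomorphism, and then obtains acyclicity of the cokernel $x\otimes y''$ from a contracting homotopy on $y''$ (available since $0\ra y''$ is a pushout of $g$ and every object is fibrant), transported through the additive functor $x\otimes(-)$; it then declares the maps of $D$ to be trivial cofibrations, so that $\o{D}\subseteq\sW$ follows from weak saturation of that class with no analysis of pushouts, transfinite compositions or filtered colimits. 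You instead reduce to the disks $0\ra M(\id_R,n)$, where everything is explicit, and carry the saturation burden yourself by proving your class $\mathcal{A}$ is weakly saturated. What each approach buys: the paper's contracting-homotopy trick works for all of $D$ at once and avoids both the cocontinuity of $x\otimes(-)$ (which your identification $\overline{D}=\overline{D_0}$ silently requires, i.e.\ essentially closedness of the monoidal structure, satisfied in the paper's examples but not in the bare hypotheses) and the Grothendieck hypothesis that your limit-ordinal step invokes; conversely, your explicit verification that $\mathcal{A}$ is weakly saturated repairs the one soft spot in the paper's proof, since in the projective model structure a strict monomorphism with acyclic cokernel is a weak equivalence but need not be a \emph{cofibration} (cofibrations there are defined by lifting), so the paper's appeal to weak saturation of the trivial cofibrations is not literally available as written, whereas your class $\mathcal{A}$ is closed under the relevant constructions by the arguments you sketch. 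The two technical caveats you flag are real but benign at the paper's level of rigor: a pushout of a strict monomorphism in a quasi-abelian category is again a strict monomorphism with canonically isomorphic cokernel (the cokernel computation is purely colimit-theoretic), and the long exact sequence for reduced cohomology of a conflation of complexes can be secured by passing to the left heart of $C$, where conflations become short exact sequences; the paper uses both facts implicitly.
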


\begin{proof}
It suffices to prove that every morphism in $\overline{D}$, the weakly saturated class of morphisms in $\dg_R^C$ generated by those of the form
$\id_M\otimes g:M\otimes N\ra M\otimes N'$ for $g$ a trivial cofibration, is a trivial cofibration in the projective model structure.  This in turn can be deduced if such morphisms in $D$ are trivial cofibrations.  Consider the strict exact sequence
\[    0\ra N\xra{g} N'\xra{g'} N''\ra 0     \]
which by definition means that it is exact in the usual sense and $g$ is strict.  Since the hom-functor $\Hom(-,Z)$ preserves strict exact sequences, the sequence
\[    0\ra M\otimes N\xra{\id_M\otimes g}M\otimes N'\xra{\id_M\otimes g'} M\otimes N''\ra 0  \]
is strict exact in $\dg^C_R$.  Therefore $\id_M\otimes g$ is a strict monomorphism.  Therefore we need to show that $M\otimes N''$ is an acyclic dg-module over $R$.  This follows from the fact that $N''$ admits a contracting homotopy.
\end{proof}

\begin{prop}
Let $C$ be a monoidal Grothendieck quasi-abelian category with enough projective objects and $R$ a commutative monoid object in $C$.  Then there exists a combinatorial model structure on the category $\tu{dga}_R^C$ of differential graded algebras over $R$ with the following classes of morphisms~:
\begin{itemize}
\item[$(\sF)$] A morphism is a fibration if and only if it is a fibration in $\dg_R^C$.
\item[$(\sW)$] A morphism is a weak equivalences if and only if it is a weak equivalence in $\dg_R^C$.
\item[$(\sC)$] The cofibrations are the those maps with the left lifting property with respect to trivial fibrations.
\end{itemize}
\end{prop}

\begin{proof}
This follows from Theorem~4.1 of \cite{SS}, Proposition~\ref{monoid} and Proposition~\ref{projective}.
\end{proof}

Let $C$ be a left proper combinatorial symmetric monoidal model category.  Recall that $C$ is said to be \textit{freely powered} if it satisfies the monoid axiom, the collection of cofibrations is generated by cofibrations between cofibrant objects and for every cofibration $f:x\ra y$ and every $n\geq 0$, the induced map 
\[      \wedge^n(f):(x\otimes y)\coprod_{x\otimes x}(x\otimes y)\ldots(x\otimes y)\coprod_{x\otimes x}(x\otimes y)\ra y^{\otimes n}  \]
(with $n$ factors of brackets in the domain) is a cofibration in the projective model category $C^{\Sigma_n}$ of functors from the symmetric group $\Sigma_n$ (on $n$ letters) to $C$.

\begin{prop}\label{freelypowered}
Let $C$ be Grothendieck quasi-abelian category with enough projective objects and $R$ a commutative monoid object in $C$ containing the field of rational numbers.  Let $\dg_R^C$ be endowed with the structure of a symmetric monoidal model category of Proposition~\ref{projective}.  Then $\dg_R^C$ is freely powered.
\end{prop}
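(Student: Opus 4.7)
The plan is to verify in turn the three defining properties of a freely powered model category: the monoid axiom, generation of the cofibrations by cofibrations between cofibrant objects, and the symmetric-power cofibration condition on $\wedge^n(f)$. The first is supplied exactly by Proposition~\ref{monoid}.

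For the second, recall from the proof of Proposition~\ref{projective} that $\dg_R(C)$ is generated as a combinatorial model category by the maps $R[n-1]\to M(\id_R,n)$. Both source and target are bounded complexes whose components are finite coproducts of the free $R$-module $R$, hence cofibrant for the projective model structure, so this condition is immediate.

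The third condition is the main content. I would follow the argument of Proposition~7.1.4.7 (and the surrounding discussion) of \cite{L2}: for a cofibration $f:x\to y$ between cofibrant dg-$R$-modules, one filters the pushout-product $\wedge^n(f)$ by subcomplexes whose successive quotients are built from the tensor powers $f^{\otimes k}$ equipped with their natural $\Sigma_n$-action, and at each stage one checks that the resulting map is a projective cofibration in the model category of $\Sigma_n$-equivariant objects of $\dg_R(C)$. The hypothesis $\mathbb{Q}\subseteq R$ enters decisively at this point, since the invertibility of $n!$ in $R$ produces the averaging idempotent in $R[\Sigma_n]$ that splits any $\Sigma_n$-action into isotypic components; consequently the equivariant projective cofibration test reduces to the ordinary one, which is controlled by the generators identified above.

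The main obstacle is transporting each step of the abelian-category argument in \emph{loc.\,cit.} to the quasi-abelian setting. The required translations are those already used throughout Section~\ref{dgm}: replace short exact sequences by strict exact sequences, monomorphisms by strict monomorphisms, and ordinary cohomology by reduced cohomology. Stability of strict monomorphisms under pushouts, coproducts and transfinite composition, which is needed to propagate cofibrancy along the filtration of $y^{\otimes n}$, is built into the definition of a quasi-abelian category; local presentability provides the small-object argument needed to assemble these filtrations and to transfer the equivariant projective model structure to $\Sigma_n$-objects in $\dg_R(C)$.
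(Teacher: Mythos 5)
Your proposal is correct and follows essentially the same route as the paper: the monoid axiom from Proposition~\ref{monoid}, the observation that the generating cofibrations of the projective model structure run between cofibrant objects, and Lurie's argument (Proposition~7.1.4.7 of \cite{L2}) for the cofibrancy of $\wedge^n(f)$ in $\dg_R(C)^{\Sigma_n}$, transported to the quasi-abelian setting by the now-standard substitutions (strict monomorphisms, reduced cohomology). The paper simply cites \emph{loc.\,cit.} for this last condition, so your unpacking of where $\bb{Q}\subseteq R$ enters --- the averaging idempotent in $R[\Sigma_n]$ reducing the equivariant projective cofibration test to the ordinary one --- is a faithful expansion of the cited argument rather than a genuinely different proof.
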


\begin{proof}
It follows from Proposition~\ref{monoid} that $\dg_R^C$ satisfies the monoid axiom and we know that in the projective model structure, cofibrations between cofibrant objects generate the class of cofibrations.  It remains to check that for every cofibration $f:M\ra N$ in $\dg_R^C$, the map $\wedge^n(f)$ is a cofibration in $(\dg_R^C)^{\Sigma_n}$ for every $n\geq 0$.  It suffices to check this property for $f$ the generating cofibration $C[u,m]\ra C[\id_R,m]$ for some $m\in\bb{Z}$.

First note that $\wedge^n(f)$ is a pushout of the map $\phi:C[u,nm]\ra C[\id_R,nm]$ in the category $(\dg_R^C)^{\Sigma_n}$ where the (co)domain is endowed with the trivial representation (if $m$ is even) and the sign representation (if $m$ is odd).  Therefore it suffices to show that $\phi$ is a cofibration in $(\dg_R^C)^{\Sigma_n}$.  Owing to the Quillen adjunction
\[   -\otimes_R R[\Sigma_n]:\dg_R^C\rightleftarrows (\dg_R^C)^{\Sigma_n}:U   \]
where $R[\Sigma_n]$ is the regular representation of $\Sigma_n$, we have a retraction
\[
\begin{tikzcd}
C[u,nm]  \arrow[r]\arrow[d,"\phi"]  & C[u,nm]\otimes_R R[\Sigma_n] \arrow[r] \arrow[d]  & C[u,nm]\arrow[d,"\phi"]\\
C[\id_R,nm]  \arrow[r]  & C[\id_R,nm]\otimes_R R[\Sigma_n]    \arrow[r]        & C[\id_R,nm]
\end{tikzcd}
\]
in the category $(\dg_R^C)^{\Sigma_n}$ where $C[u,nm]\otimes_R R[\Sigma_n]\ra C[\id_R,nm]\otimes_R R[\Sigma_n]$ is a cofibration.  The result follows.
\end{proof}

Let $R$ be a commutative monoid object in $C$ and let $\cdga_R^C$ denote the category of commutative monoid objects in $\dg_R^C$.  The objects in $\cdga_R^C$ will be called \textit{commutative differential graded algebras} over $R$.

\begin{prop}\label{cdga}
Let $C$ be a symmetric monoidal Grothendieck quasi-abelian category and $R$ a commutative monoid object in $C$ containing the field of rational numbers.  Then there exists a combinatorial model structure on the category $\cdga_R(C)$ of commutative differential graded algebras over $R$ with the following classes of morphisms~:
\begin{itemize}
\item[$(\sF)$] A morphism is a fibration if and only if it is a fibration in $\dg_R^C$.
\item[$(\sW)$] A morphism is a weak equivalence if and only if it is a weak equivalence in $\dg_R^C$.
\item[$(\sC)$] The cofibrations are the those maps with the left lifting property with respect to trivial fibrations.
\end{itemize}
\end{prop}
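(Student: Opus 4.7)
The plan is to mimic the proof for $\tu{dga}_R(C)$ given immediately above, but replacing the appeal to Schwede--Shipley (Theorem~4.1 of \cite{SS}) with the analogous transfer theorem for commutative algebra objects, which requires the stronger ``freely powered'' hypothesis that is precisely the content of Proposition~\ref{freelypowered}. Concretely, I would invoke Lurie's result (Proposition~4.5.4.6 of \cite{L2}) which asserts that if $M$ is a freely powered combinatorial symmetric monoidal model category then the category $\tu{CAlg}(M)$ of commutative algebra objects inherits a combinatorial model structure whose fibrations and weak equivalences are detected by the forgetful functor to $M$.

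First I would assemble the hypotheses for this theorem. By Proposition~\ref{projective}, the category $\dg_R(C)$ is a combinatorial model category, and by Proposition~\ref{smmc} it is symmetric monoidal (the combinatorial structure, left properness, and the pushout-product/unit axioms all follow from local presentability of $C$ and closure of strict monomorphisms under pushout). By Proposition~\ref{freelypowered}, the rationality assumption on $R$ guarantees that $\dg_R(C)$ is freely powered; in particular the monoid axiom from Proposition~\ref{monoid} holds, the cofibrations are generated by maps between cofibrant objects, and the symmetrized pushout powers $\wedge^n(f)$ are cofibrations in $\dg_R(C)^{\Sigma_n}$. These are exactly the hypotheses required.

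Second, I would apply Lurie's theorem directly to $\dg_R(C)$ and identify $\tu{CAlg}(\dg_R(C))$ with $\cdga_R(C)$. The transferred model structure then has fibrations and weak equivalences created by the forgetful functor to $\dg_R(C)$, matching the classes $(\sF)$ and $(\sW)$ in the statement, with cofibrations determined by the lifting property, matching $(\sC)$. Combinatoriality of the transferred structure follows because the free commutative algebra functor $\tu{Sym}_R$ is left adjoint to the (accessible) forgetful functor and preserves the small object argument; a set of generating (trivial) cofibrations for $\cdga_R(C)$ is obtained by applying $\tu{Sym}_R$ to a set of generating (trivial) cofibrations of $\dg_R(C)$.

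The main obstacle is verifying the freely powered hypothesis, since that is the essential new input over the associative case. This has already been dispatched in Proposition~\ref{freelypowered}, whose proof reduces the cofibrancy of $\wedge^n(f)$ in $\dg_R(C)^{\Sigma_n}$ to Lurie's argument in the abelian setting by using strict monomorphisms in place of monomorphisms and exploiting that characteristic-zero coefficients make symmetric group actions homotopically well-behaved. Once that is in hand, the rest of the proof is a purely formal invocation of the general transfer machinery.
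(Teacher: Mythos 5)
Your proposal matches the paper's proof exactly: the paper cites precisely Proposition~4.5.4.6 of \cite{L2} (Lurie's transfer theorem for commutative algebras in a freely powered symmetric monoidal model category), Proposition~\ref{freelypowered}, and Proposition~\ref{projective}, which is the same chain of reductions you describe. Your elaboration of why the hypotheses hold (combinatoriality, the monoid axiom, and the symmetrized pushout powers $\wedge^n(f)$, with the rationality assumption entering through Proposition~\ref{freelypowered}) is a correct unpacking of what the paper leaves implicit.
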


\begin{proof}
This follows from Proposition~4.5.4.6 of \cite{L2}, Proposition~\ref{freelypowered} and Proposition~\ref{projective}.  See also \cite{Wh} where the result can be deduced (in this case $\dg_R^C$ satisfies the commutative monoid axiom).
\end{proof}

\begin{prop}
There exists a Quillen equivalence
\[    \cdga_A^{\Ind(\SNorm_k)}\overset{\colim}{\underset{\tu{diss}}\rightleftarrows}\cdga_A^{\CBorn_k}   \]
of model categories endowed with the projective model structure.  This Quillen equivalence holds between ind-objects of normed spaces and separated bornological vector spaces and between ind-objects in Banach spaces and complete bornological vector spaces.
\end{prop}

\begin{proof}
This follows from Proposition~\ref{snormborn}.
\end{proof}

\section{Chevalley-Eilenberg resolutions}

We now explain Chevalley-Eilenberg resolutions in the quasi-abelian setting.  Our main result shows that given a dg-Lie algebra in a quasi-abelian category over some commutative ring object, then the Chevalley-Eilenberg complex is a cofibrant resolution of the ring object with its trivial dg-Lie algebra structure.  

\begin{dfn}
Let $C$ be a monoidal quasi-abelian category and $R$ a commutative monoid object in $C$.  A \textit{dg-Lie algebra} over $R$ in $C$ is a dg-module $(\f{g},d)$ over $R$ equipped with a bracket
\[    [.,.]:\f{g}_p\otimes_R\f{g}_q\ra\f{g}_{p+q}  \]
satisfying the following conditions~:
\begin{enumerate}
\item For $x\in\f{g}_p$ and $y\in\f{g}_q$, the relation $[x,y]+(-1)^{pq}[y,x]=0$ holds.
\item For $x\in\f{g}_p$, $y\in\f{g}_q$ and $z\in\f{g}_r$, the relation 
\[ (-1)^{pr}[x,[y,z]]+(-1)^{pq}[y,[z,x]]+(-1)^{qr}[z,[x,y]]=0  \]
holds.
\item For $x\in\f{g}_p$ and $y\in\f{g}_q$, the relation $d[x,y]=[dx,y]+(-1)^{|x|+n}[x,dy]$ holds, ie. the differential $d$ on $\f{g}$ is a derivation with respect to the bracket.
\end{enumerate}
\end{dfn}

The category of dg-Lie algebras over $R$ in $C$ and bracket preserving maps of complexes will be denoted $\tu{dgLie}_R^C$.  We have an obvious forgetful functor
\[    f:\tu{dgLie}_R^C\ra\dg_R^C   \]
to the category of dg-modules over $R$.  

Every associative dg-algebra $A$ has a primordial dg-Lie algebra structure with Lie bracket 
\[   [.,.]:A_p\otimes_k A_q\ra A_{p+q}  \]
given by $[x,y]=xy-(-1)^{pq}yx$.  The left adjoint 
\[     U:\tu{dgLie}_R(C)\ra\tu{dga}_R(C) \]
to the forgetful functor associates to $\f{g}$ its \textit{universal enveloping algebra} $U(\f{g})$.  This algebra has the following concrete form.  Let $M$ be a dg-module over $R$ and denote by 
\[     T:\dg_R(C)\ra\tu{dga}_R(C) \]
the left adjoint to the forgetful functor which associates to $M$ the tensor algebra $T(M)$ of $M$.  Explicitly, $T(M)=\bigoplus_{n\geq 0} M^{\otimes n}$ where $M^0$ is $R$ in degree $0$ by convention.  Then $U(\f{g})$ is the quotient of $T(\f{g})$ by the two-sided ideal generated by the relations $[x,y]=x\otimes y-(-1)^{pq}(y\otimes x)$ where $x\in\f{g}_p$ and $y\in\f{g}_q$.  

One can endow $U(\f{g})$ with a filtration 
\[ U(\f{g})^{\leq 0}\xhookrightarrow{i_0}U(\f{g})^{\leq 1}\xhookrightarrow{i_1}U(\f{g})^{\leq 2}\xhookrightarrow{i_2}\ldots  \] 
where $U(\f{g})^{\leq n}$ is the image of $\oplus_{0\leq i\leq n}\f{g}^{\otimes i}$ in $U(\f{g})$.  The associated graded dg-algebra $\tu{gr}(U(\f{g}))$ of this filtered object is given by $\tu{gr}(U(\f{g})):=\oplus_{n\in\bb{N}}U(\f{g})_n$ for which the underlying complex of $U(\f{g})_n$ is $\coker(i_n)$.  This graded dg-algebra is moreover a graded commutative dg-algebra.  Therefore the following result holds where 
\[    \Sym_R:\tu{dgLie}_R^C\ra\tu{cdga}_R^C  \] 
is left adjoint to the forgetful functor.

\begin{prop}(Quasi-abelian Poincar\'e-Birkhoff-Witt).
Let $C$ be a monoidal locally presentable quasi-abelian category and $R$ a commutative monoid object in $C$ containing the rational numbers.  Let $\f{g}$ be a dg-Lie algebra over $R$.  Then there exists an isomorphism
\[    \Sym_R(\f{g})\ra\tu{gr}(U(\f{g}))  \]
of commutative dg-algebras over $R$.
\end{prop}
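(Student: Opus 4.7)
The plan is to mimic the classical Poincar\'e-Birkhoff-Witt argument via graded symmetrization, taking care that every step is performed through universal properties and idempotent splittings rather than element-wise manipulation, since $C$ is only quasi-abelian.

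First I would construct the candidate morphism $\varphi: \mathrm{Sym}_R(\mathfrak{g}) \to \mathrm{gr}(U(\mathfrak{g}))$. The universal property of the tensor algebra yields a canonical morphism $T_R(\mathfrak{g}) \to U(\mathfrak{g})$; composing with the projection to the associated graded gives a morphism of dg-algebras $T_R(\mathfrak{g}) \to \mathrm{gr}(U(\mathfrak{g}))$. The image of the defining relation $x \otimes y - (-1)^{pq} y \otimes x - [x,y]$ in $\mathrm{gr}_2(U(\mathfrak{g}))$ reduces to $x \otimes y - (-1)^{pq} y \otimes x$, because $[x,y]$ lies in filtration degree one and therefore vanishes modulo $U(\mathfrak{g})^{\leq 1}$. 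Hence the morphism descends to the graded-commutative quotient $\mathrm{Sym}_R(\mathfrak{g})$, producing $\varphi$. Degreewise $\varphi_n$ is a strict epimorphism, since $U(\mathfrak{g})^{\leq n}$ is by definition the image of $\bigoplus_{i \leq n} \mathfrak{g}^{\otimes i}$, so the composite $\mathfrak{g}^{\otimes n} \to \mathrm{gr}_n(U(\mathfrak{g}))$ is a strict epimorphism which factors through $\mathrm{Sym}^n_R(\mathfrak{g})$.

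Next I would construct the inverse using graded symmetrization. Since $R$ contains $\mathbb{Q}$, the category $\mathrm{dg}_R(C)$ is $\mathbb{Q}$-linear, so the operator
\[ e_n = \frac{1}{n!} \sum_{\tau \in S_n} \tau : \mathfrak{g}^{\otimes n} \to \mathfrak{g}^{\otimes n}, \]
in which $\tau$ acts via the symmetric monoidal braiding (and therefore carries the Koszul signs), is a well-defined idempotent whose image is canonically isomorphic to $\mathrm{Sym}^n_R(\mathfrak{g})$; this identification is a formal property of symmetric algebras in a $\mathbb{Q}$-linear symmetric monoidal cocomplete category. Composing the splitting $\mathrm{Sym}^n_R(\mathfrak{g}) \hookrightarrow \mathfrak{g}^{\otimes n}$ with the canonical maps $\mathfrak{g}^{\otimes n} \to U(\mathfrak{g})^{\leq n} \to \mathrm{gr}_n(U(\mathfrak{g}))$ yields the candidate inverse $\psi_n$. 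To see $\psi_n \circ \varphi_n = \mathrm{id}$, observe that in $\mathrm{gr}_n(U(\mathfrak{g}))$ all permutations of a simple tensor agree up to Koszul sign, because transpositions differ by a commutator that lowers filtration, so $e_n$ acts as the identity on the graded piece; the equality $\varphi_n \circ \psi_n = \mathrm{id}$ is then immediate from the construction of $\varphi$.

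The main obstacle will be establishing the two categorical identifications on which the argument rests: that $\mathrm{Sym}^n_R(\mathfrak{g})$ coincides with the image of the symmetrization idempotent $e_n$, and that the associated graded pieces $\mathrm{gr}_n(U(\mathfrak{g})) = \mathrm{coker}(i_n)$ fit into strict exact sequences so that the filtration manipulations above are legitimate. The first reduces to the usual calculation that in a $\mathbb{Q}$-linear symmetric monoidal category the coinvariants, invariants and image of $e_n$ all coincide; the second uses that $U(\mathfrak{g})$ is presented as a coequalizer in $\mathrm{dga}_R(C)$ and that strict epimorphisms are stable under pushout in a quasi-abelian category. Granted these, what remains is a formal categorical translation of the classical argument.
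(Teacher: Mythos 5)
Your first step --- constructing $\varphi:\Sym_R(\f{g})\ra\tu{gr}(U(\f{g}))$ from the universal property of the tensor algebra and checking it kills the commutator relations --- is sound and matches the paper, which induces the map from $\f{g}\ra U(\f{g})^{\leq 1}$ and defers everything else to Section~1.3.7 of \cite{DM}. The genuine gap is in your second step. The ``candidate inverse'' $\psi_n$, defined as the composite $\Sym^n_R(\f{g})\ra\f{g}^{\otimes n}\ra U(\f{g})^{\leq n}\ra\tu{gr}_n(U(\f{g}))$, is a morphism in the \emph{same} direction as $\varphi_n$: both go from $\Sym^n_R(\f{g})$ to $\tu{gr}_n(U(\f{g}))$. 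The composites $\psi_n\circ\varphi_n$ and $\varphi_n\circ\psi_n$ are therefore not even defined, and your key observation --- that all permutations of a tensor agree up to Koszul sign in the associated graded, so $e_n$ acts as the identity there --- proves exactly that $\psi_n=\varphi_n$, not that either map is invertible. The symmetrization idempotent can only manufacture maps \emph{out of} $\Sym^n_R(\f{g})$, realized as a retract of $\f{g}^{\otimes n}$; inverting $\varphi_n$ requires a map \emph{out of} $\tu{gr}_n(U(\f{g}))$, and nothing in your sketch produces one. This is not a cosmetic slip: what you have established is that the kernel of $\f{g}^{\otimes n}\ra\tu{gr}_n(U(\f{g}))$ contains the kernel of $e_n$ (the commutation relations), i.e.\ that $\varphi_n$ is onto. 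The entire content of PBW is the reverse containment --- that passing to the quotient by $U(\f{g})^{\leq n-1}$ creates no relations beyond these --- and that cannot be extracted from the relations and the idempotent splitting alone.

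The missing idea, which is what the paper's citation to \cite{DM} is actually carrying, is a construction of a morphism \emph{out of} $U(\f{g})$: for instance, one builds a $\f{g}$-module structure on $\Sym_R(\f{g})$ lifting left multiplication (the classical Poincar\'e argument, where $\bb{Q}\subseteq R$ enters), so that the universal property of $U(\f{g})$ yields a filtered morphism $U(\f{g})\ra\uHom(\Sym_R(\f{g}),\Sym_R(\f{g}))$; evaluating at the unit splits the symmetrization $\Sym_R(\f{g})\ra U(\f{g})$ as a map of filtered objects and descends to the inverse of $\varphi$ on associated graded. A secondary caution specific to the quasi-abelian setting: since $U(\f{g})^{\leq n}$ is defined as an image and coimage and image need not coincide, the canonical epimorphism $\f{g}^{\otimes n}\ra U(\f{g})^{\leq n}$ is not automatically strict, so your assertion that $\varphi_n$ is degreewise a \emph{strict} epimorphism, and the strict exactness of the filtration sequences you invoke, are themselves part of what must be proved (as you correctly flag in your final paragraph) rather than formal consequences.
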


\begin{proof}
The map $f:\Sym_R(\f{g})\ra\tu{gr}(U(\f{g}))$ is induced from the canonical morphism $\f{g}\ra U(\f{g})^{\leq 1}$ and one can follow the general proof given in Section~1.3.7 of \cite{DM}.
\end{proof}

In particular, there exists an isomorphism $U(\f{g})\simeq\Sym_R(\f{g})$ of dg-modules over $R$.

\begin{prop}\label{dglie}
Let $C$ be a monoidal Grothendieck quasi-abelian category and $R$ a commutative monoid object in $C$ containing the field of rational numbers.  Then there exists a combinatorial model structure on the category $\tu{dgLie}_R^C$ of differential graded Lie algebras over $R$ with the following classes of morphisms~:
\begin{itemize}
\item[$(\sF)$] A morphism is a fibration if and only if it is a fibration in the projective model structure on $\dg_R^C$.
\item[$(\sW)$] A morphism is a weak equivalences if and only if it is a weak equivalence in the projective model structure on $\dg_R^C$.
\item[$(\sC)$] The cofibrations are the those maps with the left lifting property with respect to trivial fibrations.
\end{itemize}
\end{prop}

\begin{proof}
Let $g:\dg_R^C\ra\tu{dgLie}_R^C$ denote the free functor which is left adjoint to the forgetful functor $f$.  We then define the collection of generating cofibrations $\sC_0$ to be $\{g(C[u,n+1])\ra g(C[\id_R,n])\}_{n\in\bb{Z}}$ using Notation~\ref{notation}.  It suffices to show that our classes of generating cofibrations and weak equivalences satisfy the conditions in Proposition~A.2.6.13 of \cite{L1}.  Moreover, one must prove that a morphism in $\tu{dgLie}_k^C$ is a fibration if and only if it is a degreewise strict epimorphism.  One can follow the proof in Proposition~2.1.10 of \cite{L3} using the maps in the projective model structure of Proposition~\ref{projective}.
\end{proof}

We will call the model structure of Proposition~\ref{dglie} the \textit{projective model structure}.  For the remainder of this paper we will use the notation $\bigwedge\f{g}:=\Sym(\f{g}[1])$.

\begin{prop}\label{cofibrep}
Let $C$ be a monoidal Grothendieck quasi-abelian category and $R$ a commutative monoid object in $C$ containing the field of rational numbers.  Let $\f{g}$ be a dg-Lie algebra over $R$.  Then $U(\f{g})\otimes\bigwedge\f{g}$ is a cofibrant replacement for $R$ in the model category $\Mod(U(\f{g})):=\Mod_{U(\f{g})}(\dg_R(C))$ of $U(\f{g})$-modules.  
\end{prop}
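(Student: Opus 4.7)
The plan is to verify separately that the Chevalley-Eilenberg object $U(\f{g}) \otimes \bigwedge \f{g}$, equipped with its Chevalley-Eilenberg differential $d_{CE}$ (combining the internal differentials on $U(\f{g})$ and $\bigwedge\f{g}$ with a Koszul piece built from the left $U(\f{g})$-action and the Lie bracket of $\f{g}$) and the canonical augmentation $\epsilon : U(\f{g}) \otimes \bigwedge \f{g} \to R$, is (a) cofibrant in $\Mod(U(\f{g}))$ and (b) sent by $\epsilon$ to $R$ via a reduced quasi-isomorphism. Both arguments will hinge on the exhaustive filtration $F_p := U(\f{g}) \otimes \bigwedge^{\leq p}\f{g}$ by sub-$U(\f{g})$-dg-modules, whose successive inclusions are degreewise strict monomorphisms.

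For cofibrancy, I would observe that each quotient $F_p/F_{p-1}$ is isomorphic, as a $U(\f{g})$-module with induced differential, to $U(\f{g}) \otimes \bigwedge^p \f{g}$ carrying only the internal differential of $\bigwedge^p \f{g}$, since the Koszul part of $d_{CE}$ strictly lowers exterior degree and so vanishes on the associated graded. This successive quotient is free as a graded $U(\f{g})$-module, hence obtained from $F_{p-1}$ by a pushout of the generating cofibrations $\{U(\f{g})[n-1]\to M(\id_{U(\f{g})},n)\}_{n\in\bb{Z}}$ of Proposition~\ref{projective}. The inclusion $0 \to U(\f{g}) \otimes \bigwedge \f{g}$ is then a transfinite composite of cofibrations, hence itself a cofibration.

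For the weak equivalence, I would show the stronger statement that the augmented complex is contractible. Passing to the associated graded for $F_\bullet$, the Quasi-abelian Poincar\'e-Birkhoff-Witt theorem gives an isomorphism
\[ \tu{gr}\bigl(U(\f{g}) \otimes \bigwedge \f{g}\bigr) \simeq \Sym_R(\f{g}) \otimes \bigwedge \f{g} \]
of chain complexes over $R$, where the right-hand side carries the classical Koszul differential. This Koszul complex admits an explicit Euler-style contracting homotopy onto its augmentation to $R$; one then lifts this contracting homotopy through the filtration, inductively producing a contracting homotopy for the augmented CE complex itself. Since contractibility of a complex in a quasi-abelian category immediately implies vanishing of its reduced cohomology (a null-homotopy of the identity splits each differential), this certifies that $\epsilon$ is a reduced quasi-isomorphism.

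The main obstacle I anticipate is precisely this lift of the Koszul contracting homotopy in the quasi-abelian setting: free choice of splittings is unavailable and spectral sequence arguments are delicate in the absence of a full abelian structure. Fortunately the standard formula for the CE contracting homotopy is purely functorial in the tensor structure and can be written down explicitly in terms of the $U(\f{g})$-action and the wedge operation, so the construction should transport verbatim from the abelian setting without appealing to a spectral sequence collapse. A secondary subtlety is that the cofibrancy argument implicitly uses that each $\bigwedge^p \f{g}$ behaves suitably as an $R$-module; in the main examples (e.g.\ complete bornological dg-Lie algebras, where Lemma~\ref{cofibrant} ensures every dg-module is cofibrant) this is automatic, but in the general setting it may be prudent to first replace $\f{g}$ by a cofibrant model via Proposition~\ref{dglie} before running the construction.
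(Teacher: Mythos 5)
Your strategy (filter the Chevalley--Eilenberg object, identify the associated graded, contract it, lift the contraction) is genuinely different from the paper's, and as written it contains two concrete gaps. First, a filtration mismatch: for your $F_p=U(\f{g})\otimes\bigwedge^{\leq p}\f{g}$, \emph{both} the Koszul piece and the bracket piece of $d_{CE}$ strictly lower exterior degree, so the associated graded carries only the internal differential; it is $U(\f{g})\otimes\bigwedge\f{g}$, not $\Sym_R(\f{g})\otimes\bigwedge\f{g}$ with the Koszul differential, and Poincar\'e--Birkhoff--Witt plays no role for this filtration (you even use the correct description of $\tu{gr}(F_\bullet)$ in your cofibrancy paragraph, contradicting the identification in your weak-equivalence paragraph). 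To see the Koszul complex you must pass to the total filtration $G_n=\sum_{i+j\leq n}U(\f{g})^{\leq i}\otimes\bigwedge^{j}\f{g}$ combining the PBW filtration with exterior degree: then the Koszul piece preserves filtration and survives to $\tu{gr}$, while the bracket piece drops it. Second, and more seriously, the step ``lift the contracting homotopy through the filtration'' is exactly the point that does not transport verbatim: the explicit Euler-type homotopy you invoke exists for the Koszul complex (the abelian case), but for nonabelian $\f{g}$ there is no functorial closed formula --- the classical arguments use either a PBW basis or a spectral-sequence convergence argument, and in a quasi-abelian category you can neither choose splittings of the strict exact sequences $0\ra G_{n-1}\ra G_n\ra\tu{gr}_n\ra 0$ (which is what assembling a homotopy from one on $\tu{gr}$ requires) nor quote a long-exact-sequence or convergence argument for reduced cohomology without proving it. The paper sidesteps all of this: it realizes $U(\f{g})\otimes\bigwedge\f{g}$ as $U(\f{g}\oplus\f{g}[1])$, the enveloping algebra of the cone dg-Lie algebra, so the contraction lives on the cone itself --- where it is the tautological one on the mapping cone of $\id_{\f{g}}$ --- and the weak equivalence $R=U(0)\ra U(\f{g}\oplus\f{g}[1])$ follows from the fact that $U$ preserves reduced quasi-isomorphisms, with PBW used only to identify $U(\f{g}\oplus\f{g}[1])\simeq\bigwedge\f{g}\otimes U(\f{g})$ as $U(\f{g})$-modules.

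On cofibrancy: your cell-attachment argument is the standard one, but freeness of $F_p/F_{p-1}$ as a graded module is not by itself sufficient --- the inclusion $F_{p-1}\ra F_p$ is a pushout of the generating cofibrations of Proposition~\ref{projective} only if the attaching complex $\bigwedge^{p}\f{g}$ is itself built from cells, which is precisely the caveat you flag. Your proposed remedy, however, does not repair the stated proposition: replacing $\f{g}$ by a cofibrant model changes $U(\f{g})$ and hence the ambient category $\Mod(U(\f{g}))$ in which $R$ is to be resolved, so it proves a statement about a different module category. (In the paper's principal examples this issue is moot, since by Lemma~\ref{cofibrant} every object of $\dg_A^\cborn$ is cofibrant; the paper's own proof is silent on cofibrancy for the same reason.)
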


\begin{proof}
Define the \textit{cone} $\f{g}\oplus\f{g}[1]$ of $\f{g}$ to be the dg-Lie algebra with differential 
\[    d_n(x+\epsilon y):=dx+y-\epsilon dy  \]
and Lie bracket
\[     [x+\epsilon y,x'+\epsilon y']=[x,x']+\epsilon([y,x']+(-1)^n[x,y'])   \]
for any $x\in\f{g}_n$ and $y\in\f{g}_{n+1}$.  The underlying dg-module of $\f{g}\oplus\f{g}[1]$ is the mapping cone of the identity on $\f{g}$.  Therefore $\f{g}\oplus\f{g}[1]$ is contractible and thus there exists a reduced quasi-isomorphism $0\ra\f{g}\oplus\f{g}[1]$ in $\dg_R(C)$.  Since $U$ preserves reduced quasi-isomorphisms, the map $U(0)=R\ra U(\f{g}\oplus\f{g}[1])$ is a reduced quasi-isomorphism.
By the Poincar\'e-Birkhoff-Witt theorem, there exists an equivalence $U(\f{g}\oplus\f{g}[1])\simeq\bigwedge\f{g}\otimes_kU(\f{g})$ in $\Mod(U(\f{g}))$ and the result follows.
\end{proof}

\begin{ex}
Let $C$ be a closed symmetric monoidal locally presentable quasi-abelian category and $A$ an object in $\cdga_R^C$.  Assume $A$ is endowed with an action of a dg-Lie algebra $\f{g}$ over $R$.  Then a model for the quotient space $A/\f{g}$ is the dg-algebra $\uHom_{\f{g}}(R,A)$.  Then we have a chain
\[     \uHom_{\f{g}}(R,A)\simeq \uHom_{U(\f{g})}(R,A)\simeq \uHom_{U(\f{g})}(\bigwedge\f{g}\otimes_RU(\f{g}),A)\simeq\uHom_{R}(\bigwedge\f{g},A)\hookleftarrow\bigwedge\f{g}'\otimes_RA   \]
of equivalences.  
\end{ex}

\begin{ex}
The category $\tu{dgLie}_A^{\CBorn_k}$ of \textit{complete bornological dg-Lie algebras} admits a projective model structures due to Proposition~\ref{projective}.  Given a complete bornological dg-Lie algebra $\f{g}$ over a complete bornological algebra $A$, then the Chevalley-Eilenberg complex associated to $\f{g}$ is a cofibrant replacement of $A$ in the model category of modules over the universal enveloping algebra of $\f{g}$.  Moreover, the inclusion $\bigwedge\f{g}'\otimes_kA\hookrightarrow\uHom_{k}(\bigwedge\f{g},A)$ is dense when $\bigwedge\f{g}$ satisfies the bornological approximation property \cite{KM}.  This is satisfied, for example, when $\bigwedge\f{g}$ is a nuclear Fr\'echet space. 
\end{ex}

\section{Koszul-Tate resolutions}

Let $C$ be a symmetric monoidal quasi-abelian category and $R$ a commutative monoid object in $C$.  For any dg-$R$-module $P$, we will define an object $K(R,P)$ which we prove is a cofibrant replacement of $R$ in a certain model category of commutative dg-algebras.  This dg-algebra is closely related to the Koszul complex (associated to an element in $P$) in the setting of dg-algebras in the quasi-abelian setting.

We endow $\dg_R$ with the symmetric monoidal model structure of Proposition~\ref{projective} and define the dual of a $R$-dg-module $P$ to be
$  P':=\uHom(P,R)  $
where $R$ is the unit object of the monoidal structure.  

Denote by
\[    Q:=\Sym(P')  \]
the symmetric algebra on $P'$.  Let $\bigwedge P'$ be the exterior algebra of $P'$ considered as a graded $R$-dg-algebra.  The $Q$-dg-module $Q\otimes_R\bigwedge P'$ is then a (non-positively) graded commutative $Q$-dg-algebra where the grading is given by 
\[   (Q\otimes_R\bigwedge P')_n:=Q\otimes_R{\bigwedge}^{-n}P'  \]
for $n\leq 0$.  

We endow this graded commutative $Q$-dg-algebra with a differential as follows.  Firstly, consider the map
\[   h_{n+1}:Q\otimes_R{\bigwedge}^{n+1} P'\ra Q\otimes_R P'\otimes_R P\otimes_R{\bigwedge}^{n+1} P' \]
induced from the canonical map $i:R\ra\uHom(P,P)\simeq P'\otimes_R P$ sending $1_R$ to $\id_P$.  Secondly, consider the map
\[   c_{n+1}:Q\otimes_R P'\otimes_R P\otimes_R{\bigwedge}^{n+1} P' \ra Q\otimes_R{\bigwedge}^{n} P' \]
induced by the action of $P'\subset Q$ on $Q$ by right multiplication and the action of $P\subset{\bigwedge}^{n+1}P$ on ${\bigwedge}^{n+1} P'$ defined by $m\cdot(f_1\wedge\ldots\wedge f_{n+1}):=\sum_i (-1)^i f_i(m)(f_1\wedge\ldots\wedge\hat{f}_j\wedge\ldots\wedge f_{n+1})$ for $m\in P$.  We then define the differential
\[   d_{n+1}:Q\otimes_R{\bigwedge}^{n+1}P'\ra Q\otimes_R{\bigwedge}^nP'  \]
by $d_{n+1}:=c_{n+1}\circ h_{n+1}$.  We denote by 
\[   K(R,P):=(Q\otimes_R\bigwedge P',d)   \] 
the resulting commutative dg-algebra over $Q$.

The zero section $R\ra Q$ defines a natural map given by the composition
\[ K(R,P)\ra (Q\otimes_R\bigwedge P')_0=Q\ra R  \]
which we call the augmentation map.  

\begin{prop}\label{koscorep}
The augmentation map $K(R,P)\ra R$ is a cofibrant replacement of $R$ in the model category $\cdga_Q$ of commutative dg-algebras over $Q$ with respect to the projective model structure.
\end{prop}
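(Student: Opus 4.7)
The plan is to verify the two requirements for a cofibrant replacement: (i) $K(R,P)$ is cofibrant in $\cdga_Q$ with the projective model structure, and (ii) the augmentation $\epsilon\colon K(R,P)\to R$ is a reduced quasi-isomorphism. Since the weak equivalences in $\cdga_Q$ are exactly the reduced quasi-isomorphisms of the underlying dg-modules, I can check (ii) at the level of $\dg_R(C)$.

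For (i), observe that the underlying graded $Q$-algebra of $K(R,P)$ is the free graded commutative $Q$-algebra $\Sym_Q(P'[-1])$ on a copy of $P'$ placed in cohomological degree $-1$, and the Koszul differential makes $K(R,P)$ a semi-free extension of $Q$ whose generators have differential landing in the augmentation ideal of $Q$. I exhibit $Q\to K(R,P)$ as a transfinite composition of cell attachments by filtering through exterior degree: set $K^{(n)}:=Q\otimes_R\bigwedge^{\leq n}P'$, so that $K^{(0)}=Q$, $K(R,P)=\colim_n K^{(n)}$, and each inclusion $K^{(n)}\hookrightarrow K^{(n+1)}$ is a pushout in $\cdga_Q$ along the free extension attaching $\bigwedge^{n+1}P'$ in degree $-(n+1)$ via the restriction of the Koszul differential to $K^{(n)}$. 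These pushouts are along cofibrations built from the generating cofibrations of Proposition~\ref{projective}, using the stability of strict monomorphisms under pushouts in a quasi-abelian category, which establishes cofibrancy.

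For (ii), I construct a contracting homotopy on the augmentation ideal. Let $E$ denote the Euler derivation on $Q=\Sym_R(P')$ acting as multiplication by $p$ on $\Sym^p_R P'$, and let $N$ be the weight operator on $\bigwedge P'$ acting as multiplication by $n$ on $\bigwedge^n P'$. Using the coevaluation $i\colon R\to P'\otimes_R P$ that defines the Koszul differential $d$, I define an $R$-linear endomorphism $s$ of $K(R,P)$ of cohomological degree $-1$, dual to $d$, which peels off a $P'$-factor from $Q$ and wedges it into the exterior part. A direct calculation modeled on the classical Koszul--Euler argument yields the identity $ds+sd=E+N$. Since $R$ contains $\bb{Q}$, the operator $E+N$ is invertible on the subcomplex where $E+N>0$, rendering that subcomplex acyclic; the remaining kernel is $R\otimes_R\bigwedge^0 P'\simeq R$ in degree $0$, on which $\epsilon$ restricts to the identity. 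Hence $\epsilon$ induces an isomorphism on reduced cohomology.

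The main obstacle lies in step (i): making precise that each filtration step $K^{(n)}\hookrightarrow K^{(n+1)}$ is a pushout along a genuine cofibration in $\cdga_Q$. This requires identifying the attaching data with cell extensions built from the generating cofibrations of the projective model structure and verifying that the semi-free Koszul differential is compatible with this cellular decomposition. The acyclicity step (ii) is by contrast a routine adaptation of a classical calculation, provided the coevaluation $i$ behaves as in the dualizable case.
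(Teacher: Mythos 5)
Your step (i) contains a genuine gap, and it is precisely the step you flag as the main obstacle. The proposed filtration does not exist as a tower in $\cdga_Q$: the subobject $K^{(n)}:=Q\otimes_R\bigwedge^{\leq n}P'$ is a $Q$-submodule but \emph{not} a subalgebra of $K(R,P)$, since the product of two exterior-degree-one elements already lands in $\bigwedge^2P'$. So the inclusions $K^{(n)}\hookrightarrow K^{(n+1)}$ are not morphisms of commutative dg-algebras, and there is no transfinite sequence of cell attachments of this shape --- indeed $K(R,P)$ is generated over $Q$ in exterior degree one, so everything is attached at the first stage and nothing remains to attach afterwards. The repair is a one-step identification, and it is exactly the one the paper's proof turns on: as a commutative dg-algebra over $R$ one has $K(R,P)\simeq\Sym_R(P'\oplus P'[1])$, where $P'\oplus P'[1]$ is the cone of $\id_{P'}$, and under this identification the unit map $Q\ra K(R,P)$ is $\Sym_R$ applied to the inclusion $P'\hookrightarrow P'\oplus P'[1]$. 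That inclusion is the pushout-product of $0\ra P'$ with a generating cofibration $R[n-1]\ra M(\id_R,n)$ of Proposition~\ref{projective}, hence a cofibration in $\dg_R(C)$ whenever $P'$ is cofibrant (automatic in the paper's main examples by Lemma~\ref{cofibrant}); since $\Sym_R$ is left Quillen, $Q\ra K(R,P)$ is a cofibration, i.e.\ $K(R,P)$ is cofibrant in $\cdga_Q$. The paper itself is blunter still: it asserts that every object is cofibrant and then only verifies that the augmentation is a trivial fibration.

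Your step (ii) is correct and takes a genuinely different, more hands-on route than the paper. The paper deduces that $R=\Sym_R(0)\ra\Sym_R(P'\oplus P'[1])\simeq K(R,P)$ is a reduced quasi-isomorphism from contractibility of the cone together with the assertion that $\Sym_R$ preserves reduced quasi-isomorphisms; your Euler identity $ds+sd=E+N$ is precisely the classical proof of that assertion in this case, carried out explicitly. To make it airtight in the quasi-abelian setting, note that $d$ and $s$ preserve the total weight $p+n$, so $K(R,P)$ splits as a coproduct of subcomplexes of fixed weight $w$; on each piece with $w\geq 1$ the homotopy $w^{-1}s$ contracts (using $\bb{Q}\subseteq R$, which is in any case required for the projective model structure on $\cdga_Q$ to exist), the weight-zero piece is $R$, and reduced cohomology commutes with the coproduct under the Grothendieck hypothesis. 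Your hedge about the coevaluation is immaterial: both $d$ and $s$ can be written using only the (co)multiplications of $\Sym P'$ and $\bigwedge P'$, with no dualizability of $P$ --- though the paper's own definition of $d$ via $i:R\ra\uHom(P,P)\simeq P'\otimes_RP$ does implicitly assume it. Finally, record the easy half you left implicit: the augmentation is a degreewise strict epimorphism, hence a fibration, so together with (i) and (ii) it is a trivial fibration from a cofibrant object, as the proposition demands.
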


\begin{proof}
Since every object is cofibrant, it suffices to check that $K(R,P)\ra R$ is a trivial fibration.  It is clearly a fibration so we will check that it is a weak equivalence.  By definition, we need to show that 
\[  H^n(K(R,P))\ra H^n(R)   \]
is an isomorphism of objects in $C$.  The underlying dg-module of $P'\oplus P'[1]$ is the mapping cone of the identity on $P'$.  Therefore $0\ra P'\oplus P'[1]$ is a reduced quasi-isomorphism.  A map between dg-modules $M\ra N$ over $R$ is a reduced quasi-isomorphism if and only if $\Sym_R(M)\ra\Sym_R(N)$ is a quasi-isomorphism.  Therefore
\[    \Sym_R(0)=R\ra\Sym_R(P'\oplus P'[1])\simeq K(R,P)  \]
is a reduced quasi-isomorphism and the result follows.
\end{proof}

The usual Koszul algebra is 
\[  K(R,P;m):=(\bigwedge P',d_m)  \] 
which is a commutative dg-algebra over $R$ for any $m\in P$.  The differential $d_m$ is induced by contraction along $m$ where $d_m(f_1\wedge\ldots\wedge f_n)=\sum_{i=1}^{n}(-1)^{i+1} f_i(m)(f_1\wedge\ldots\wedge\hat{f}_i\wedge\ldots\wedge f_n)$.  This choice of element $m$ also induces a map $Q\ra R$ given by evaluation at $m$.  We denote by $R_m$ the dg-algebra $R$ (concentrated in degree $0$) with this $Q$-algebra structure.  

\begin{lem}\label{alemma}
There exists an equivalence
\[  R_m\otimes_Q K(R,P)\ra K(R,P;m)   \]
of dg-algebras over $Q$.
\end{lem}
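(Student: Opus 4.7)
The plan is to verify the equivalence by first computing the underlying graded algebra of $R_m\otimes_Q K(R,P)$ and then tracing through what the differential becomes after base change along $Q\to R_m$.

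First, using associativity of the tensor product and the fact that $K(R,P)=Q\otimes_R\bigwedge P'$ as a graded $Q$-algebra, I would write
\[   R_m\otimes_Q K(R,P) \;=\; R_m\otimes_Q (Q\otimes_R \bigwedge P') \;\cong\; R_m\otimes_R \bigwedge P' \;\cong\; \bigwedge P',  \]
so the underlying graded algebras on both sides agree with $\bigwedge P'$. This identification is manifestly compatible with the $Q$-algebra structure (on the right, $Q$ acts through the augmentation $Q\to R_m$ sending $f\in P'$ to $f(m)$, extended multiplicatively).

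Next I would check that the differential $1\otimes d$ on $R_m\otimes_Q K(R,P)$ corresponds to $d_m$ on $K(R,P;m)=(\bigwedge P',d_m)$ under the identification above. By construction $d=c\circ h$, where $h_{n+1}$ inserts the coevaluation element $\sum e'_i\otimes e_i$ corresponding to $\id_P\in\uHom(P,P)\simeq P'\otimes_R P$, and $c_{n+1}$ multiplies the $e'_i$ factor into $Q$ by right multiplication while contracting the $e_i$ factor against $\bigwedge^{n+1}P'$. After base change along $Q\to R_m$, right multiplication by $e'_i\in P'\subset Q$ becomes multiplication by the scalar $e'_i(m)\in R$, so the composite $c_{n+1}\circ h_{n+1}$ applied to $1\otimes \omega$ becomes $\sum_i e'_i(m)\,(e_i\cdot\omega) = \bigl(\sum_i e'_i(m)e_i\bigr)\cdot\omega = m\cdot\omega$, since $\sum_i e'_i(m)e_i\in P$ is precisely the image of $m$ under $\id_P$. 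This is exactly the contraction $d_m$ defining $K(R,P;m)$.

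Finally I would observe that since both the graded algebra identification and the differential identification are compatible with the $Q$-algebra structure (the right-hand side being a $Q$-algebra via $Q\to R\to R_m$, i.e.\ evaluation at $m$), the resulting map is an equivalence of dg-algebras over $Q$. The only real content is the middle step, identifying the contraction element $\sum_i e'_i(m)e_i$ with $m$; this is a clean categorical calculation in the closed symmetric monoidal category $C$ using the coevaluation/evaluation unit and counit, and does not require any special feature of the quasi-abelian structure beyond the existence of $\uHom(P,P)\simeq P'\otimes_R P$.
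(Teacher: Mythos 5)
Your proposal is correct and takes essentially the same route as the paper: identify the underlying graded $Q$-algebra via $R_m\otimes_Q(Q\otimes_R\bigwedge P')\cong\bigwedge P'$ and then match the induced differential with $d_m$. In fact you go further than the paper, whose proof stops short at the instruction ``Show that the induced differential on $R_m\otimes_Q K(R,P)$ is $d_m$''; your coevaluation computation, reducing right multiplication by $e_i'$ in $Q$ to the scalar $e_i'(m)$ and invoking the zig-zag identity $\sum_i e_i'(m)e_i=m$, supplies exactly that missing verification, under the same dualizability hypothesis $\uHom(P,P)\simeq P'\otimes_R P$ that the paper already assumes in defining $h_{n+1}$.
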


\begin{proof}
We view $\bigwedge P'$ as a $Q$-module via the composite morphism $Q\ra R\ra\bigwedge P'$ of dg-algebras where the first map is given by evaluation at $m$ and the second map is the canonical one.  The underlying graded $Q$-module of $R_m\otimes_Q K(R,P)$ is then $R_m\otimes_Q Q\otimes_R\bigwedge P'$ which is isomorphic to $\bigwedge P'$ as graded modules over $Q$.  It is also clear that the induced differential on $R_m\otimes_Q K(R,P)$ is $d_m$ and the result follows.            
\end{proof}

Lemma~\ref{alemma} extends to an equivalence of commutative dg-algebras over $R$ via the canonical map $R\ra Q$. 

\begin{ex}
We denote by $\tu{dAff}^{\CBorn_k}:=(\cdga^{\CBorn_k})^\circ$ the opposite of the category of complete bornological commutative dg-algebras over $k$.  For any morphism $f:A\ra B$ of complete bornological commutative dg-algebras, there exists a relative cotangent complex $\bb{L}_{A/B}$ (see \cite{L2}).  The morphism $f:A\ra B$ is said to be \'etale if it is of finite presentation and $\bb{L}_{A/B}\simeq 0$.

Recall that a model site is a model category together with a Grothendieck topology on its homotopy category \cite{TV}.  The category $\tu{dAff}^{\CBorn_k}$ is a model site for the \'etale topology.  Therefore, we obtain a model category $\tu{St}(\tu{dAff}^{\CBorn_k})$ of complete bornological stacks on the model site for the \'etale topology (see \textit{loc.cit.} for more details).
 
Let $A$ be a complete bornological dg-algebra over $k$ and $X=\Spec A$ the image of $A$ in $\tu{St}(\Aff^{\CBorn_k})$.  Set $T^*X:=\Spec(\Sym(\bb{T}_X))$ where $\bb{T}_X:=\bb{T}_A$ is the tangent complex (dual to the cotangent complex) of $A$.  Let $df:X\ra T^*X$ be the differential for a function $f$ on $X$.  Then the stack $\tu{dCrit}(f)$ given by the homotopy pullback
\[
\begin{tikzcd}
X  \arrow[r,"{df}"]  &T^*X  \\
\tu{dCrit}(f)  \arrow[r]\arrow[u]     & X\arrow[u,"0"]
\end{tikzcd}
\]
in the model category of complete bornological stacks, called the \textit{derived critical locus} of $f$, can be calculated explicitly as follows.  There exists a chain of equivalences
\[  \tu{dCrit}(f)\simeq X\times_{\{df,T^*X,0\}} QX\simeq X\times_{\{df,T^*X,0\}}\Spec(K(A,\bb{L}_X))\simeq\Spec(K(A,\bb{L}_X;df))   \]
of complete bornological stacks by Proposition~\ref{koscorep} and Lemma~\ref{alemma}.  Therefore, the shifted cotangent stack
\[   T^*X[-1]:=\Spec (\Sym(\bb{T}_X[1]),d_{df})  \]
is a model for the derived critical locus of $f$ and $(\Sym(\bb{T}_X[1]),d_{df})$ is a model for the complete bornological commutative dg-algebra of functions on the derived critical locus.
\end{ex}

\newpage


\end{document}